\newtheorem{theorem}{Theorem}
\newtheorem*{theorem*}{Theorem}
\newtheorem{corollary}[theorem]{Corollary}
\newtheorem{lemma}[theorem]{Lemma}
\newtheorem{rem}[theorem]{Remark}
\newtheorem{proposition}[theorem]{Proposition}
\newtheorem{claim}[theorem]{Claim}
\newtheorem{sublemma}[theorem]{Sublemma}
\newcommand{\rr}{\mathbb{R}}
\newcommand{\ee}{\varepsilon}
\newcommand{\nn}{\mathbb{N}}
\newcommand{\supp}{\mathrm{supp}}
\newcommand{\cat}{^\smallfrown}
\begin{document}

\title{The Szlenk index of $L_p(X)$ and $A_p$}
\author{Ryan M. Causey}

\begin{abstract}Given a Banach space $X$, a $w^*$-compact subset of $X^*$, and $1<p<\infty$, we provide an optimal relationship between the Szlenk index of $K$ and the Szlenk index of an associated subset of $L_p(X)^*$.  As an application, given a Banach space $X$, we prove an optimal estimate of the Szlenk index of $L_p(X)$ in terms of the Szlenk index of $X$. This extends a result of H\'{a}jek and Schlumprecht to uncountable ordinals.   More generally, given an operator $A:X\to Y$, we provide an estimate of the Szlenk index of the ``pointwise $A$'' operator $A_p:L_p(X)\to L_p(Y)$ in terms of the Szlenk index of $A$. \end{abstract}

\maketitle

\section{Introduction}

Throughout this work, $X$ will be a fixed Banach space and $K\subset X^*$ will be a $w^*$-compact, non-empty subset.   For $1<p<\infty$, we let $K_p$ denote the $w^*$-closure in $L_p(X)^*$ of all functions of the form $gh\in L_q(X^*)\subset L_p(X)^*$, where $g:[0,1]\to K$ is simple and  Lebesgue measurable, and $h\in B_{L_q}$.  Recall that these functions act on $L_p(X)$ by $\langle gh, f\rangle = \int_0^1 \langle g(\varpi), f(\varpi)\rangle h(\varpi)d\varpi$ for $f\in L_p(X)$.  Note that if $R\geqslant 0$ is such that  $K\subset RB_{X^*}$, $K_p\subset RB_{L_p(X)^*}$, so that $K_p$ is also $w^*$-compact.    If $K=B_{X^*}$, $K_p=B_{L_p(X)^*}$ by the Hahn-Banach theorem. If $A:X\to Y$ is an operator, then there exists a ``pointwise $A$'' operator $A_p:L_p(X)\to L_p(Y)$ given by $(A_p f)(\varpi)=A(f(\varpi))$ for all $\varpi\in [0,1]$.  Then if $K=A^*B_{Y^*}$, $K_p=(A_p)^*B_{L_p(Y)^*}$, which follows from the Hahn-Banach theorem.  Thus it is natural to examine what relationship exists between $K$ and $K_p$.   In particular, one may ask what relationship exists between the Szlenk indices of these sets.  To that end, we obtain the optimal relationship.  In what follows, $\omega$ denotes the first infinite ordinal.

\begin{theorem}  Fix $1<p<\infty$.  Suppose that $\xi$ is an ordinal such that $Sz(K)\leqslant \omega^\xi$.  Then $Sz(K_p)\leqslant \omega^{1+\xi}$.    If $K$ is convex, $Sz(K_p)\leqslant \omega Sz(K)$.  If $K$ is convex and $Sz(K)\geqslant \omega^\omega$,  $Sz(K)=Sz(K_p)$.

\label{main theorem}
\end{theorem}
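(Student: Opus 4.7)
The plan is to prove $Sz(K_p)\leqslant \omega^{1+\xi}$ by transfinite induction on $\xi$, driven by a ``coordinate-exchange'' sublemma that trades $\omega$ successive Szlenk derivations of $K_p$ for a single Szlenk derivation of $K$, with a controlled loss in the parameter $\varepsilon$. Specifically, fixing $\varepsilon>0$ and choosing $\delta=\delta(\varepsilon)>0$ appropriately, I would aim to establish
\[
s_\varepsilon^{\omega\alpha}(K_p)\subseteq (s_\delta^{\alpha}(K))_p
\]
for every ordinal $\alpha$ by transfinite induction on $\alpha$. Setting $\alpha=\omega^\xi$ then gives $s_\varepsilon^{\omega^{1+\xi}}(K_p)\subseteq (\varnothing)_p=\varnothing$, and taking the supremum over $\varepsilon>0$ yields $Sz(K_p)\leqslant \omega^{1+\xi}$.

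The main technical content is the base case $\alpha=1$: if $f\in s_\varepsilon^\omega(K_p)$ then $f\in (s_\delta(K))_p$. The idea is that such an $f$ survives $n$ Szlenk derivations for every finite $n$, so in each prescribed $w^*$-neighborhood of $f$ one can locate $\varepsilon$-separated approximants drawn from the generating family $\{gh:g\ \text{simple}\ K\text{-valued},\ h\in B_{L_q}\}$. Using the reflexivity of $L_q$ (equivalently, $Sz(B_{L_q})=\omega$), one extracts a subsequence of the scalar parts $h_n\rightharpoonup h_\infty$ weakly in $L_q$; the $\varepsilon$-separation must therefore survive in the $K$-valued parts $g_n$, and a partition refinement together with bookkeeping produces a pointwise element of $s_\delta(K)$ inside the decomposition of $f$. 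The successor and limit steps of the induction on $\alpha$ then follow from the standard facts $s_\varepsilon^{\alpha+1}=s_\varepsilon(s_\varepsilon^\alpha)$ and $s_\varepsilon^{\lambda}=\bigcap_{\alpha<\lambda}s_\varepsilon^\alpha$.

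For the convex case, convexity of $K$ implies that $K_p$ is also $w^*$-closed and convex, which permits a sharper slicing analysis: every Szlenk derivative can be computed against half-spaces rather than general $w^*$-neighborhoods, and one can directly prove the inequality $Sz(K_p,\varepsilon)\leqslant \omega\cdot Sz(K,\delta(\varepsilon))$ without an intermediate rounding to a power of $\omega$. Taking $\varepsilon\to 0$ gives $Sz(K_p)\leqslant \omega\cdot Sz(K)$.

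Finally, the map $x^*\mapsto x^*\cdot \chi_{[0,1]}$ is a $w^*$-continuous isometric embedding of $K$ into $K_p$, yielding $Sz(K)\leqslant Sz(K_p)$. Since nontrivial Szlenk indices of $w^*$-compact sets are powers of $\omega$, write $Sz(K)=\omega^\eta$; the hypothesis $Sz(K)\geqslant \omega^\omega$ forces $\eta\geqslant \omega$ and hence $1+\eta=\eta$, so $\omega\cdot Sz(K)=\omega^{1+\eta}=\omega^\eta=Sz(K)$. Combined with the convex bound from the previous paragraph, this gives $Sz(K_p)=Sz(K)$. The principal obstacle throughout is the base case of the sublemma: the $w^*$-topology on $L_p(X)^*$ entangles the $X^*$-valued and $L_q$-scalar coordinates of the test functions, and the reflexivity of $L_q$ must be invoked at precisely the $\omega$-th stage of the derivation to decouple them cleanly.
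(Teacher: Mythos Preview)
Your route is genuinely different from the paper's and is essentially the H\'ajek--Schlumprecht strategy that this paper is extending. The paper never attempts an inclusion $s_\varepsilon^{\omega\alpha}(K_p)\subseteq (s_\delta^\alpha(K))_p$. Instead it invokes a characterization of the bound $Sz(K_p)\leqslant\omega^{1+\xi}$ in terms of normally weakly null trees in $B_{L_p(X)}$ (Theorem~\ref{characterization}), and discharges that criterion through a two-player game on $\Gamma_{\xi}.X.\mathcal{C}$ (Lemma~\ref{game lemma}, Corollary~\ref{espoo}) together with $p$-concavity estimates for an auxiliary seminormed space built from $K$. This machinery never needs to represent elements of $L_p(X)^*$ as functions. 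The convex and $Sz(K)\geqslant\omega^\omega$ statements then follow formally from the first part plus the fact that $Sz(K)=\omega^\xi$ for convex $K$, exactly as in your final paragraph.

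The substantive gap is your base case $s_\varepsilon^\omega(K_p)\subseteq (s_\delta(K))_p$. Your sketch speaks of ``a pointwise element of $s_\delta(K)$ inside the decomposition of $f$,'' but in general $L_p(X)^*\supsetneq L_q(X^*)$ and $f$ need not be representable as a function at all; this is precisely the obstruction the paper's game-theoretic apparatus is designed to sidestep, and it is the reason the H\'ajek--Schlumprecht argument as written is confined to separable $X$ and countable indices. The claim that ``the $\varepsilon$-separation must survive in the $K$-valued parts $g_n$'' once $h_n\rightharpoonup h_\infty$ is also doing unexplained work: weak convergence of the scalar parts does not by itself force any usable separation or convergence of the $g_n$. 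Two smaller points: your limit step requires $\bigcap_\beta(L_\beta)_p\subseteq(\bigcap_\beta L_\beta)_p$, which is not the identity $s_\varepsilon^\lambda=\bigcap_{\alpha<\lambda}s_\varepsilon^\alpha$ you cite and needs its own (provable, via support functions and monotone convergence) argument; and if your sublemma held for all $\alpha$ it would already yield $Sz(K_p,\varepsilon)\leqslant\omega\,Sz(K,\delta)$ without convexity, making your separate convex paragraph redundant.
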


Using the facts stated in the introduction that $K_p=(A_p)^* B_{L_p(Y)^*}$ if $K=A^*B_{Y^*}$, we immediately deduce the following from Theorem \ref{main theorem}. 

\begin{corollary} Fix $1<p<\infty$. If $A:X\to Y$ is an operator and $K=A^*B_{Y^*}$, then $Sz(A_p)\leqslant \omega Sz(A)$, and if $Sz(A)\geqslant \omega^\omega$, $Sz(A_p)=Sz(A)$.    In particular, $A_p$ is Asplund if and only if $A$ is. 

\label{main corollary}
\end{corollary}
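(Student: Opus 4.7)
The plan is to read the corollary directly off of Theorem~\ref{main theorem} applied to the set $K:=A^*B_{Y^*}$. Two observations do the bulk of the translation: first, $K$ is convex because it is the image of the convex set $B_{Y^*}$ under the linear map $A^*$; second, the identifications $K_p=(A_p)^*B_{L_p(Y)^*}$, $Sz(A)=Sz(K)$, and $Sz(A_p)=Sz(K_p)$ are already recorded in the introduction (the last two by the very definition of the Szlenk index of an operator). With these in hand, the bound $Sz(A_p)\leqslant \omega Sz(A)$ and the equality $Sz(A_p)=Sz(A)$ under the hypothesis $Sz(A)\geqslant \omega^\omega$ are verbatim restatements of the second and third clauses of Theorem~\ref{main theorem}.

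For the ``in particular'' assertion I would invoke the standard characterization that an operator $T$ is Asplund if and only if $Sz(T)<\omega_1$. If $A$ is Asplund, then $Sz(A)<\omega_1$, and since the countable ordinals are closed under the operation $\alpha\mapsto \omega\alpha$, we get $Sz(A_p)\leqslant \omega Sz(A)<\omega_1$, so $A_p$ is Asplund. Conversely, assume $A_p$ is Asplund, so $Sz(A_p)<\omega_1$, and split cases on $Sz(A)$: if $Sz(A)<\omega^\omega$ then $Sz(A)<\omega_1$ automatically, since $\omega^\omega<\omega_1$; if instead $Sz(A)\geqslant \omega^\omega$, the equality clause of Theorem~\ref{main theorem} yields $Sz(A)=Sz(A_p)<\omega_1$. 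Either way $A$ is Asplund.

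Once Theorem~\ref{main theorem} is in hand, this deduction is essentially formal and there is no genuine obstacle. The one point worth highlighting is that the reverse direction of the Asplund equivalence truly needs the equality clause of Theorem~\ref{main theorem}, not merely the inequality $Sz(A_p)\leqslant \omega Sz(A)$, because the latter provides no upper bound on $Sz(A)$ in terms of $Sz(A_p)$. This explains \emph{a posteriori} why the theorem had to be strengthened beyond the one-sided bound in order to support a clean Asplund-versus-Asplund statement at the level of operators.
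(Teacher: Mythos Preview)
Your derivation of $Sz(A_p)\leqslant \omega Sz(A)$ and of the equality for $Sz(A)\geqslant \omega^\omega$ from Theorem~\ref{main theorem} is correct and matches the paper, which simply declares the corollary immediate from Theorem~\ref{main theorem} via the identification $K_p=(A_p)^*B_{L_p(Y)^*}$.

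Your handling of the Asplund equivalence, however, has a real error. The characterization ``$T$ is Asplund iff $Sz(T)<\omega_1$'' is valid only in the separable setting, whereas this paper works with arbitrary Banach spaces and explicitly aims to extend H\'{a}jek--Schlumprecht to uncountable ordinals. The correct characterization here, recalled in Section~2.2, is that $T$ is Asplund iff $Sz(T)<\infty$ (i.e., the Szlenk index is an ordinal). Your forward implication survives this correction, since ordinals are closed under $\alpha\mapsto\omega\alpha$ and the paper adopts the convention $\omega\cdot\infty=\infty$. Your converse also survives once $\omega_1$ is replaced by $\infty$, because Theorem~\ref{main theorem} covers the case $Sz(K)=\infty$.

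That said, the converse is much simpler than your argument suggests: as noted in the proof of Theorem~\ref{main theorem}, one always has $Sz(K)\leqslant Sz(K_p)$ (essentially because $K$ sits inside $K_p$ as constant functions), so $Sz(A)\leqslant Sz(A_p)$ and the implication ``$A_p$ Asplund $\Rightarrow$ $A$ Asplund'' is immediate. Your final paragraph, claiming that the equality clause of Theorem~\ref{main theorem} is \emph{needed} for this direction, is therefore mistaken; only the trivial inequality is required.
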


Applying Corollary \ref{main corollary} to the identity of a Banach space, we extend the result of H\'{a}jek and Schlumprecht from \cite{HS} to uncountable ordinals.

We recall that $K$ is said to be $w^*$-\emph{fragmentable} if for any non-empty subset $L$ of $K$ and any $\ee>0$, there exists a $w^*$-open subset $U$ of $X^*$ such that $L\cap U\neq \varnothing$ and $\text{diam}(L\cap U)<\ee$.  We recall that $K$ is $w^*$-\emph{dentable} if for any non-empty subset $L$ of $K$ and any $\ee>0$, there exists a $w^*$-open slice $S$ of $X^*$ such that $L\cap U\neq \varnothing$ and $\text{diam}(L\cap S)<\ee$.  We recall that a $w^*$-open slice is a subset of $X^*$ of the form $\{x^*\in X^*: \text{Re\ }x^*(x)>a\}$ for some $x\in X$ and $a\in \rr$. As mentioned in \cite{C2}, a consequence of Corollary \ref{main corollary} is that if $Sz(K)\leqslant \omega^\xi$, then $Sz(K)\leqslant Dz(K)\leqslant \omega^{1+\xi}$, where $Dz(K)$ denotes the $w^*$-dentability index of $K$.  Thus Corollary \ref{main corollary} implies that $K$ is $w^*$-dentable if and only if it is $w^*$-fragmentable.  

In addition to considering the Szlenk index of a set, one may consider the $\xi$-Szlenk power type $\textbf{p}_\xi(L)$ of the set $L$, which is important in $\xi$-asymptotically uniformly smooth renormings of Banach spaces and operators. The concept of a $\xi$-asymptotically uniformly smooth operator was introduced in \cite{CD}, and further sharp renorming results regarding the $\xi$-Szlenk power type of an operator were established in \cite{power}.  To that end, we have the following.

\begin{theorem} For any ordinal $\xi$ and any $1<p<\infty$, if $1/p+1/q=1$, $\textbf{\emph{p}}_{1+\xi}(K_p)\leqslant \max\{q, \textbf{\emph{p}}_\xi(K)\}$.  
\label{power type}

\end{theorem}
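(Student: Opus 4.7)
\emph{Proof proposal.} First I would recall the definition of the $\xi$-Szlenk power type: $\mathbf{p}_\xi(L) \leq r$ means there is a constant $C$ such that for every $\varepsilon \in (0,1]$ there exists an integer $n \leq C\varepsilon^{-r}$ with $(s_\varepsilon^{\omega^\xi})^n(L) = \varnothing$. The task is therefore to show that, from the hypothesis $\mathbf{p}_\xi(K) \leq p$, one can bound the number of iterated $\omega^{1+\xi}$-derivations of $K_p$ at level $\varepsilon$ by $C' \varepsilon^{-\max\{q,p\}}$.

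The strategy is to upgrade the proof of Theorem \ref{main theorem} from a qualitative ordinal bound to a quantitative one. The qualitative argument surely passes through a ``one-step reduction'' lemma roughly of the following shape: if $L \subset K_p$ is a $w^*$-compact set built from simple functions $g:[0,1]\to K$ whose values lie in a fragment of $K$ of ``$K$-derivation complexity'' $n$, and from step functions $h \in B_{L_q}$ subordinate to a partition of mesh at least $\delta$, then $s_{c\varepsilon}^{\omega^{1+\xi}}(L)$ is contained in a union of similar sets with either $n$ decreased by $1$, or $\delta$ decreased by a factor depending on $\varepsilon$. I would isolate this lemma and track its $\varepsilon$-dependence explicitly. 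The tool producing the exponent $q$ is H\"older's inequality: the pairing of $gh \in L_q(X^*)$ with $f \in L_p(X)$ is $\int \langle g,f\rangle h$, so that a refinement of a partition piece of measure $\delta$ contributes at most $\delta^{1/q}$ to the $L_p(X)^*$-distance; a useful refinement therefore requires $\delta \gtrsim \varepsilon^q$, capping the number of refinement steps at $C\varepsilon^{-q}$.

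The power-type bookkeeping then consists of representing the iterated derivations of $K_p$ via trees whose vertices carry two labels: a $K$-derivation counter and a partition-refinement counter. Along any root-to-leaf branch the first counter decreases at most $C\varepsilon^{-p}$ times, by the hypothesis $\mathbf{p}_\xi(K) \leq p$ applied at level $\sim \varepsilon$, and the second at most $C\varepsilon^{-q}$ times, by the measure lower bound above. The total branch length is thus bounded by the sum, hence by a constant multiple of $\varepsilon^{-\max\{p,q\}}$, yielding the desired polynomial estimate.

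The principal obstacle, in my view, is establishing that these two counters are additive rather than multiplicative along branches, so that one really obtains the maximum of the exponents rather than their sum. Achieving this requires a careful interleaving argument: an $L_p$-refinement step must not reset the $K$-derivation progress made on individual pieces, and conversely a $K$-derivation step must respect the existing partition structure. This is the point at which the ordinal identity $\omega\cdot\omega^\xi = \omega^{1+\xi}$ enters, separating the two derivation types cleanly, with the leading factor $\omega$ absorbing the refinement steps. The sharp constant $q$ is characteristic of quantitative $L_p$-valued analysis and should specialize, in the case $K = B_{X^*}$, to the asymptotic smoothness estimates for $L_p(X)$ underlying the results of \cite{HS}.
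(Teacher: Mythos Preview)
Your approach is genuinely different from the paper's, and it contains a real gap that you yourself flag but do not close.

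The paper never analyzes the Szlenk derivations of $K_p$ directly. It works on the primal side via Theorem~\ref{characterization}(ii): to obtain $\mathbf{p}_{1+\xi}(K_p)\leqslant\alpha$ it suffices to show, for every $n$ and every normally weakly null tree $(f_t)\subset B_{L_p(X)}$ indexed by a $B$-tree of order $\omega^{1+\xi}n$, that some convex combination along a branch satisfies $\|\varrho(f)\|_{L_p}\leqslant Cn^{-1/\alpha}$. This is done through an auxiliary seminormed space $(c_{00}(\Gamma_{\xi,n}.X.\mathcal{C}),\|\cdot\|_{\theta,\phi,\alpha})$: Claim~\ref{claim2} dominates pointwise values $\varrho(\sum a_ix_i)$ by the auxiliary norm, and Claim~\ref{claim1} gives a $1$-lower $\ell_\beta$ estimate, so by Lindenstrauss--Tzafriri the space is $p$-concave whenever $\beta<p$. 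That constraint is exactly $\alpha>q$, and it is the sole source of the exponent $q$. The exponent $\mathbf{p}_\xi(K)$ enters through Lemma~\ref{game lemma 2}: with $\theta=2^{-1/\alpha}$ and $\phi=2^{-1/\gamma}$ for some $\gamma>\mathbf{p}_\xi(K)$, the bound $Sz_\xi(K,\phi^i/2)\leqslant C2^i$ forces Player~I to win a game on $\Gamma_{\xi,n}.X.\mathcal{C}$ with value $\leqslant C_1n^{1/\beta}$, and Corollary~\ref{espoo} converts this into the required $L_p$ estimate. No partition-refinement counter appears anywhere.

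The gap in your sketch is precisely what you call ``the principal obstacle'': you assert that the $K$-derivation counter and the partition-refinement counter are additive along branches, but give no mechanism. Naive bookkeeping yields a product, not a sum, because a single $\omega^{1+\xi}$-derivation of $K_p$ does not factor as ``one $L_q$-refinement followed by one pointwise $\omega^\xi$-derivation in $K$'': an $\ee$-fat $w^*$-neighbourhood in $K_p$ can witness spatial and fibrewise perturbations simultaneously, at every scale. Absent an honest decoupling lemma your outline produces at best $\mathbf{p}_{1+\xi}(K_p)\leqslant q+\mathbf{p}_\xi(K)$, not the maximum. The paper sidesteps this entirely by passing to the primal convex-combination characterisation and letting the $p$-concavity inequality handle the $L_p$/pointwise coupling in a single stroke. (A minor point: you overload the symbol $p$ to denote both the $L_p$ exponent and $\mathbf{p}_\xi(K)$, which makes the proposal hard to read against the theorem statement.)
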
 

In the case that $\xi\geqslant \omega$ and $\textbf{p}_\xi(K)\leqslant p$, $\textbf{p}_\xi(K)=\textbf{p}_\xi(K_p)$, in showing that Theorem \ref{power type} is sharp in some cases.

The author wishes to thank P.A.H. Brooker, P. H\'{a}jek, N. Holt, and Th. Schlumprecht for helpful remarks during the preparation of this work.

\section{$L_p(X)$, Trees, Szlenk index, games}

\subsection{Trees, $\Gamma_{\xi,n}$, $\mathbb{P}_{\xi,n}$, and stablization results} Given a set $\Lambda$, we let $\Lambda^{<\nn}$ denote the finite, non-empty sequences in $\Lambda$. Given two members $s,t$ of $\Lambda^{<\nn}$, we let $s\cat t$ denote the concatenation of $s$ and $t$, $|s|$ denotes the length of $s$, $s\preceq t$ means $s$ is an initial segment of $t$, and $s|_i$ denotes the initial segment of $s$ having length $i$.    Given $t\in \Lambda^{<\nn}$, we let $[\preceq t]=\{s\in\Lambda^{<\nn}: s\preceq t\}$. 

Any subset $T$ of $\Lambda^{<\nn}$ which contains all non-empty initial segments of its members will be called a $B$-\emph{tree}. We define by transfinite induction the \emph{derived}$B$\emph{ trees} of $T$.  We let $MAX(T')$ denote the $\preceq$-maximal members of $T$ and $T'=T\setminus MAX(T)$. We then define $T^0=T$, $T^{\xi+1}=(T^\xi)'$, and if $\xi$ is a limit ordinal, $T^\xi=\cap_{\gamma<\xi}T^\gamma$.    We let $o(T)$ denote the smallest ordinal $\xi$ such that $T^\xi=\varnothing$, provided such an ordinal exists.  If no such ordinal exists, we write $o(T)=\infty$.  We say $T$ is \emph{well-founded} if $o(T)$ is an ordinal, and $T$ is \emph{ill-founded} if $o(T)=\infty$.   For convenience, we agree to the convention that if $\xi$ is an ordinal $\xi<\infty$, and that $\omega \infty=\infty$.   

 Given a $B$-tree $T$ and a Banach space $Y$, we let $T.Y=\{(\zeta_i, Z_i)_{i=1}^k: (\zeta_i)_{i=1}^k\in T, Z_i\in \text{codim}(Y)\}$, where $\text{codim}(Y)$ denotes the closed subspaces of $Y$ having finite codimension in $Y$.   We let $\mathcal{C}$ denote the norm compact subsets of $B_X$ and $$T.X.\mathcal{C}=\{(\zeta_i, Z_i, C_i)_{i=1}^k: (\zeta_i)_{i=1}^k, Z_i \in\text{codim}(X), C_i\in \mathcal{C}\}.$$   We note that $T.Y$ and $T.X.\mathcal{C}$ are $B$-trees.  Furthermore, for any ordinal $\gamma$, $(T.Y)^\gamma=T^\gamma.Y$ and $(T.X.\mathcal{C})^\gamma=T^\gamma.X.\mathcal{C}$.    In particular, $T.Y$ and $T.X.\mathcal{C}$ have the same order as $T$.   

Given a $B$-tree $T$, a Banach space $Y$,  and a collection $(x_t)_{t\in T.Y}\subset Y$, we say $(x_t)_{t\in T.Y}$ is \emph{normally weakly null} provided that for any $t=(\zeta_i, Z_i)_{i=1}^k\in T.Y$, $x_t\in Z_k$.    Given another $B$-tree $S$ and a function $\sigma:S.Y\to T.Y$, we say $\sigma$ is a \emph{pruning} provided that for every $s, s_1\in S.Y$ with $s\prec s_1$, $\sigma(s)\prec \sigma(s_1)$, and if $s_1=s\cat (\zeta, Z)$ and $\sigma(s_1)=t\cat (\mu, W)$ for some $t\in T.Y$, $W\leqslant Z$.    If $\sigma:S.Y\to T.Y$ is a pruning and $\tau:MAX(S.Y)\to MAX(T.Y)$ is such that for every $s\in MAX(S.Y)$, $\sigma(s)\preceq \tau(s)$, we say the pair $(\sigma,\tau)$ is an \emph{extended pruning}, and denote this by $(\sigma,\tau):S.Y\to T.Y$.

For every $\xi\in \nn$ and $n\in \nn$, a $B$-tree $\Gamma_{\xi,n}$ was defined in \cite{C} so that $o(\Gamma_{\xi,n})=\omega^\xi n$.  Furthermore, a function $\mathbb{P}_\xi:\Gamma_\xi\to [0,1]$ was defined so that for every $t\in MAX(\Gamma_\xi)$, $\sum_{s\preceq t}\mathbb{P}_\xi(s)=1$.   Furthermore, $\Gamma_{\xi+1}$ is the disjoint union of $\Gamma_{\xi,n}$, $n\in \nn$.    For convenience, we define $\mathbb{P}_{\xi,n}:\Gamma_{\xi,n}\to [0,n]$ by $\mathbb{P}_{\xi,n}(s)=n \mathbb{P}_{\xi+1}(s)$. It follows from the definitions that $\Gamma_{\xi,1}=\Gamma_\xi$ and $\mathbb{P}_{\xi,1}=\mathbb{P}_\xi$.   For every $\xi$ and every $n\in \nn$, there exist disjoint subsets $\Lambda_{\xi,n,1}$, $\ldots$, $\Lambda_{\xi,n,n}$ of $\Gamma_{\xi,n}$ such that $\Gamma_{\xi,n}=\cup_{i=1}^n \Lambda_{\xi,n,i}$.   It follows from the facts regarding $\mathbb{P}_{\xi+1}$ discussed in \cite{C} that, with these definitions, for every ordinal $\xi$, every $n\in \nn$, every $1\leqslant i\leqslant n$, and every $t\in MAX(\Gamma_{\xi,n})$, $\sum_{\Lambda_{\xi,n,i}\ni s\preceq t} \mathbb{P}_{\xi,n}(s)=1$.  For any Banach space $Y$, we may define $\mathbb{P}_{\xi,n}$ on $\Gamma_{\xi,n}.Y$ and $\Gamma_{\xi,n}.X.\mathcal{C}$ by letting $$\mathbb{P}_{\xi,n}((\zeta_i, Z_i)_{i=1}^k)=\mathbb{P}_{\xi,n}((\zeta_i)_{i=1}^k)$$ and $$\mathbb{P}_{\xi,n}((\zeta_i, Z_i, C_i)_{i=1}^k)= \mathbb{P}_{\xi,n}((\zeta_i)_{i=1}^k).$$    We say an extended pruning $(\sigma, \tau):\Gamma_{\xi,n}.X\to \Gamma_{\xi,n}.X$ is \emph{level preserving} provided that for every $1\leqslant i\leqslant n$, $\sigma(\Lambda_{\xi,n,i})\subset \Lambda_{\xi,n,i}$.

The following theorem collects results from Theorem $3.3$, Propositions $3.2$, $3.3$, and Lemma $3.4$ of \cite{power}.  

\begin{theorem} Suppose $\xi$ is an ordinal and $n$ is a natural number.  \begin{enumerate}[(i)]\item If $f:\Pi(\Gamma_\xi.n.X)\to \mathbb{R}$ is bounded and $\lambda\in \mathbb{R}$ is such that $$\lambda<\inf_{t\in MAX(\Gamma_{\xi.n}.X)} \sum_{s\preceq t}\mathbb{P}_{\xi,n}(s) f(s,t),$$ then there exist a level preserving extended pruning $(\sigma,\tau):\Gamma_{\xi,n}.X\to \Gamma_{\xi,n}.X$ and real numbers $b_1, \ldots, b_n$ such that $\lambda<\sum_{i=1}^n b_i$ and for every $1\leqslant i\leqslant n$ and every $\Lambda_{\xi, n, i}\ni s\preceq t\in MAX(\Gamma_{\xi,n}.X)$, $b_i\leqslant f(\sigma(s), \tau(t))$. \item If $(M,d)$ is a compact metric space and  $f:\Pi(\Gamma_{\xi,n}.X)\to M$ is any function, then for any $\delta>0$, there exist $x_1, \ldots, x_n\in M$ and a level preserving extended pruning $(\sigma,\tau):\Gamma_{\xi,n}.X\to \Gamma_{\xi,n}.X$ such that for every $1\leqslant i\leqslant n$ and every $\Lambda_{\xi,n,i}\ni s\preceq t\in MAX(\Gamma_{\xi,n}.X)$, $d(x_i, f(\sigma(s), \tau(t)))<\delta$. \item If $F$ is a finite set and $f:MAX(\Gamma_{\xi,n}.X)\to F$ is any function, there exists a level preserving extended pruning $(\sigma, \tau):\Gamma_{\xi,n}.X\to \Gamma_{\xi,n}.X$ such that $f\circ \tau|_{MAX(\Gamma_{\xi,n}.X)}$ is constant. \item For any natural numbers $k_1<\ldots <k_r\leqslant n$, there exists an extended pruning $(\sigma, \tau):\Gamma_{\xi,r}.X\to \Gamma_{\xi, n}.X$ such that for every $1
\leqslant i\leqslant r$,  $\sigma(\Lambda_{\xi, n,i})\subset \Lambda_{\xi, n, k_i}$. \end{enumerate}

\label{stabilization}
\end{theorem}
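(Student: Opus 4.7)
The plan is a transfinite induction on $\xi$, tackling the four parts in the order (iv)$\to$(iii)$\to$(ii)$\to$(i) so that each reduces to the previous ones via standard compactness and discretization. The engine is the recursive structure $\Gamma_{\xi+1}=\bigsqcup_{m\in\nn}\Gamma_{\xi,m}$ together with the partition $\Gamma_{\xi,n}=\bigcup_{i=1}^n \Lambda_{\xi,n,i}$, each block being order-isomorphic to $\Gamma_\xi$ and carrying $\mathbb{P}_{\xi,n}$-weights that sum to $1$ along any maximal branch.

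For (iv), I would argue by pure structural splicing: fix for each $i$ an order-isomorphism $\Lambda_{\xi,r,i}\to\Lambda_{\xi,n,k_i}$ and concatenate them to produce $\sigma:\Gamma_{\xi,r}.X\to\Gamma_{\xi,n}.X$, routing each path of $\Gamma_{\xi,r}$ through $\Lambda_{\xi,n,k_1},\ldots,\Lambda_{\xi,n,k_r}$ in order and choosing $Z$-components in $\text{codim}(X)$ subject to the pruning compatibility $W\leqslant Z$ (which is easy to arrange since there is no external constraint). For (iii), I would induct on $\xi$. The base case $\xi=0$ reduces to ordinary finite pigeonhole once one descends into finite-codimensional subspaces. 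At a successor $\xi+1$, each maximal branch of $\Gamma_{\xi+1,n}$ restricted to $\Lambda_{\xi+1,n,i}$ decomposes as a concatenation of subbranches living in copies of $\Gamma_\xi$; the inductive hypothesis stabilizes $f$ on each copy to a constant in the finite range $F$, and an outer pigeonhole over the finitely many possible constants collapses these to a single value per block. At limit ordinals $\xi$, one presents $\Gamma_\xi$ as a concatenation over $\xi_k\uparrow\xi$ and iterates. Part (iv) is invoked at every stage to guarantee that the pruning satisfies the level-preserving condition $\sigma(\Lambda_{\xi,n,i})\subset\Lambda_{\xi,n,i}$.

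Parts (ii) and (i) then follow by standard arguments. For (ii), I would cover $M$ by finitely many balls of radius $\delta$, record the index of a covering ball at each maximal branch, and apply (iii) to the resulting finite-valued map, taking the centers as the $x_i$. For (i), fix $\epsilon>0$ with $\lambda+\epsilon<\inf_t\sum_{s\preceq t}\mathbb{P}_{\xi,n}(s)f(s,t)$, apply (ii) with $\delta=\epsilon/(3n)$ and $M$ equal to the closure of the (bounded) range of $f$, and set $b_i=x_i-\delta$. The lower bound $f(\sigma(s),\tau(t))\geqslant b_i$ on $\Lambda_{\xi,n,i}$ is then immediate, and the identity $\sum_{\Lambda_{\xi,n,i}\ni s\preceq t}\mathbb{P}_{\xi,n}(s)=1$ combined with the upper bound $f\leqslant x_i+\delta$ forces $\sum_i(x_i+\delta)\geqslant\lambda+\epsilon$, giving $\sum_i b_i\geqslant\lambda+\epsilon-2n\delta>\lambda$.

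The principal difficulty is preserving the block structure $\cup_i\Lambda_{\xi,n,i}$ under successive prunings across the induction. At each successor step the pigeonhole must act \emph{simultaneously} on all $n$ blocks so that the resulting pruning is level preserving, and at a limit ordinal the countable iteration of such prunings must be organized so that the final composite remains level preserving. This is precisely where the combinatorial bookkeeping developed in \cite{C} and \cite{power} for $\Gamma_\xi$, $\mathbb{P}_\xi$, and the $\Lambda$-partitions is indispensable; without an explicit model of these trees that lets one identify $\Lambda_{\xi,n,i}$ with $\Gamma_\xi$ canonically, the composition of prunings across transfinitely many inductive steps cannot be controlled.
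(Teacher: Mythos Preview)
The paper does not prove this theorem; it is stated as a compilation of results from \cite{power} (Theorem~3.3, Propositions~3.2 and~3.3, and Lemma~3.4 there), so there is no in-paper argument to compare against. Your inductive scheme for (iii) and (iv), and the reduction of (ii) to (iii) via a finite $\delta$-net in $M$, are the standard route and are consistent with what one expects from such stabilization lemmas.

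Your derivation of (i) from (ii), however, has a gap. After applying (ii) you know $|f(\sigma(s),\tau(t))-x_i|<\delta$ whenever $s\in\Lambda_{\xi,n,i}$, and you want to conclude $\sum_i(x_i+\delta)\geqslant\lambda+\epsilon$. For this you implicitly need
\[
\sum_{s\preceq t}\mathbb{P}_{\xi,n}(s)\,f(\sigma(s),\tau(t))\;>\;\lambda+\epsilon
\]
for some maximal $t$ in the \emph{domain} tree, i.e.\ that the pruned function still satisfies the infimum hypothesis. But the hypothesis only gives $\sum_{s'\preceq\tau(t)}\mathbb{P}_{\xi,n}(s')\,f(s',\tau(t))>\lambda+\epsilon$, a sum ranging over \emph{all} $s'\preceq\tau(t)$ and weighted by $\mathbb{P}_{\xi,n}(s')$, not a sum over $\{\sigma(s):s\preceq t\}$ weighted by $\mathbb{P}_{\xi,n}(s)$. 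A level-preserving pruning need neither preserve $\mathbb{P}_{\xi,n}$ nor be surjective onto $[\preceq\tau(t)]$, so these two sums are unrelated in general. Part (i) therefore cannot simply be read off from (ii); it requires its own inductive argument in which one selects, level by level, a sub-tree on which the running partial weighted sum already exceeds the target, tracking the numbers $b_i$ through the construction rather than extracting them post hoc from an approximate-constancy statement.
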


\subsection{The Szlenk index, Szlenk power type}

Given a $w^*$-compact subset $L$ of $X^*$ and $\ee>0$, we let $s_\ee(K)$ denote the set consisting of those $x^*\in L$ such that for every $w^*$-neighborhood $V$ of $x^*$, $\text{diam}(L\cap V)>\ee$.    We define the transfinite derivations $$s_\ee^0(L)=L,$$ $$s_\ee^{\xi+1}(L)=s_\ee(s_\ee^\xi(L)),$$ and if $\xi$ is a limit ordinal, $$s_\ee^\xi(L)= \bigcap_{\zeta<\xi}s_\ee^\zeta(L).$$   If there exists an ordinal $\xi$ such that $s_\ee^\xi(L)=\varnothing$, we let $Sz(L, \ee)$ be the minimum such ordinal.  Otherwise we write $Sz(L, \ee)=\infty$. Since $s_\ee^\xi(L)$ is $w^*$-compact, we deduce that $Sz(L, \ee)$ cannot be a limit ordinal.  We agree to the conventions that $\omega \infty=\infty$ and $\xi<\infty$ for any ordinal $\xi$.     We let $Sz(L)=\sup_{\ee>0}Sz(L, \ee)$.     If $B:Z\to W$ is an operator, we let $Sz(B, \ee)=Sz(B^*B_{W^*}, \ee)$, $Sz(B)=Sz(B^*B_{W^*})$.  If $Z$ is a Banach space, $Sz(Z, \ee)=Sz(I_Z, \ee)$ and $Sz(Z)=Sz(I_Z)$.

We recall that a set $L\subset X^*$ is called $w^*$-\emph{fragmentable} if for any $\ee>0$ and any $w^*$-compact, non-empty subset $M$ of $L$, $s_\ee(M)\subsetneq M$.  This is equivalent to $Sz(L)<\infty$.  We say an operator $B:Z\to W$ is Asplund if $B^*B_{W^*}$ is $w^*$-fragmentable, which happens if and only if $Sz(B)<\infty$.  We say a Banach space $Z$ is Asplund if $I_Z$ is Asplund.  These are not the original definitions of Asplund spaces and operators, but they are equivalent to the original definitions (see \cite{Brooker}).

If $Sz(K)\leqslant \omega^{\xi+1}$, then for any $\ee>0$, $Sz(K, \ee)\leqslant \omega^\xi n$ for some $n\in \nn$.  We let $Sz_\xi(K, \ee)$ be the smallest $n\in \nn$ such that $Sz(K, \ee)\leqslant \omega^\xi n$.    We define the $\xi$-\emph{Szlenk power type} $\textbf{p}_\xi(K)$ of $K$ by $$\textbf{p}_\xi(K)=\underset{\ee\to 0^+}{\lim\sup} \frac{\log Sz_\xi(K, \ee)}{|\log(\ee)|}.$$  This value need not be finite.  By convention, we let $\textbf{p}_\xi(K)=\infty$ if $Sz(K)>\omega^{\xi+1}$.  We let $\textbf{p}_\xi(A)=\textbf{p}_\xi(A^*B_{Y^*})$ and $\textbf{p}_\xi(X)=\textbf{p}_\xi(B_{X^*})$.   The quantities $\textbf{p}_\xi(X)$, $\textbf{p}_\xi(A)$ are important for the renorming theorem of $\xi$-asymptotically uniformly smooth norms with power type modulus.

Given a $w^*$-compact subset $L$ of $X^*$ and $\ee>0$, we let $\mathcal{H}_\ee^L$ denote the set of Cartesian products $\prod_{i=1}^n C_i$ such that $C_i\in  \mathcal{C}$ for each $1\leqslant i\leqslant n$ and such that there exist $(x_i)_{i=1}^n\in \prod_{i=1}^n C_i$ and $x^*\in K$ such that for each $1\leqslant i\leqslant n$, $\text{Re\ }x^*(x_i)\geqslant \ee$.

\subsection{The Szlenk index of $K_p$}

Recall that for $1<p<\infty$, $L_p(X)$ denotes the space of equivalence classes of Bochner integrable functions $f:[0,1]\to X$ such that $\int \|f\|^p<\infty$, where $[0,1]$ is endowed with its Lebesgue measure.  Recall also that if $1<q<\infty$, $L_q(X^*)$ is isometrically included in $L_p(X)^*$ by the action $$f\mapsto \int \langle g,f\rangle,$$ for $g\in L_q(X^*)$.    We also recall that if $\varrho:X\to \rr$ is any Lipschitz function, then for any $f\in L_p(X)$, $\varrho\circ f\in L_p$.    

We note that the Szlenk index and the $\xi$ Szlenk power type of $K$ are unchanged by scaling $K$ by a positive scalar or by replacing $K$ with its balanced hull.  Moreover, for a positive scalar $c$, $(cK)_p=cK_p$, which has the same Szlenk index and $\xi$-Szlenk power type as $K_p$.  If $\mathbb{T}K$ is the balanced hull of $K$, $K_p\subset (\mathbb{T}K)_p$ and $Sz(K)=Sz(\mathbb{T} K)$ (\cite[Lemma $2.2$]{C}) so that Theorem \ref{main theorem}, Corollary \ref{main corollary}, and Theorem \ref{power type} hold in general if they hold under the assumption that $K\subset B_{X^*}$ is balanced.  Therefore we can and do assume throughout that $K\subset B_{X^*}$ and $K$ is balanced.

Let $\varrho:X\to \rr$ be given by $\varrho(x)=\max_{x^*\in K}\text{Re\ }x^*(x)$.  Since we have assumed $K$ is balanced, $\varrho(x)=\max_{x^*\in K}|x^*(x)|$.   It is easy to see that for any $1<p<\infty$ and any $f\in L_p(X)$, $\|\varrho(f)\|_{L_p}=\max_{f^*\in K_p}\text{Re\ }f^*(f)$.   Combining this fact with \cite[Corollary $2.4$]{power} and the proof of that corollary, we obtain the following.  

\begin{theorem} Fix $1<p, \alpha<\infty$. \begin{enumerate}[(i)]\item If for every $B$-tree $T$ with $o(T)=\omega^{1+\xi}$ and every normally weakly null $(f_t)_{t\in T.L_p(X)}\subset B_{L_p(X)}$, $$\inf \bigl\{ \|\varrho(f)\|_{L_p}: t\in T.L_p(X), f\in \text{\emph{co}}(f_s: \varnothing\prec s\preceq t) \bigr\}=0,$$ then $Sz(K_p)\leqslant \omega^{1+\xi}$.     

\item If there exists a constant $C$ such that for every $n\in \nn$, every $B$-tree $T$ with $o(T)=\omega^{1+\xi} n$, and every normally weakly null collection $(f_t)_{t\in T.L_p(X)}\subset B_{L_p(X)}$, $$\inf \bigl\{\|\varrho(f)\|_{L_p}: t\in T.L_p(X), f\in \text{\emph{co}}(f_s: \varnothing\prec s\preceq t)\bigr\}\leqslant C n^{-1/\alpha}, $$    then $\textbf{\emph{p}}_{1+\xi}(K_p)\leqslant \alpha$. 

\end{enumerate}

\label{characterization}
\end{theorem}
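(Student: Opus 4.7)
The plan is to reduce Theorem~\ref{characterization} to the abstract characterization of the Szlenk index and Szlenk power type of a balanced $w^*$-compact set in terms of normally weakly null trees, namely \cite[Corollary 2.4]{power}. The reduction hinges on the single identification already recorded in the paragraph above the theorem: the support functional of $K_p$ at $f\in L_p(X)$ equals $\|\varrho\circ f\|_{L_p}$, i.e.\ $\sup_{f^*\in K_p}\text{Re\,}f^*(f)=\|\varrho\circ f\|_{L_p}$.

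First I would recall the abstract statement. For any balanced $w^*$-compact $L\subset Z^*$, the bound $Sz(L)\leqslant \omega^{1+\xi}$ is equivalent to the assertion that for every normally weakly null tree $(z_t)_{t\in T.Z}\subset B_Z$ indexed by a $B$-tree $T$ with $o(T)=\omega^{1+\xi}$, the infimum of the support functional $\sup_{z^*\in L}\text{Re\,}z^*(\cdot)$ over convex blocks of branches is zero; an analogous quantitative version on trees of order $\omega^{1+\xi}n$ controls $\textbf{p}_{1+\xi}(L)$. Both are the content of \cite[Corollary 2.4]{power} together with the ingredients of its proof. Specializing to $Z=L_p(X)$ and $L=K_p$ and substituting the support-functional identity, the hypothesis of (i) becomes exactly the tree hypothesis of the abstract characterization, yielding (i). For (ii), the quantitative bound $\inf\|\varrho\circ f\|_{L_p}\leqslant Cn^{-1/\alpha}$ on trees of order $\omega^{1+\xi}n$ translates into $Sz_{1+\xi}(K_p,\varepsilon)\lesssim \varepsilon^{-\alpha}$ for small $\varepsilon>0$ (choosing $n$ of order $\varepsilon^{-\alpha}$), whence $\textbf{p}_{1+\xi}(K_p)\leqslant \alpha$ by the definition of Szlenk power type.

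The main obstacle is verifying that \cite[Corollary 2.4]{power}, originally formulated in the operator/Banach space setting, adapts without substantive change when the ``norming'' role is played by the arbitrary balanced $w^*$-compact set $K_p$ rather than a dual ball. One checks that the proof there uses only that the target functional is sublinear, $1$-Lipschitz, and detects when a point survives the Szlenk derivation of the corresponding $w^*$-compact set --- all properties of $\sup_{f^*\in K_p}\text{Re\,}f^*(\cdot)$ since $K_p\subset B_{L_p(X)^*}$. The author's phrasing ``and the proof of that corollary'' signals precisely this adaptation, after which both conclusions are immediate.
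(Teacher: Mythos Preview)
Your proposal is correct and matches the paper's approach exactly: the paper does not give a standalone proof of Theorem~\ref{characterization} but simply records the identity $\|\varrho\circ f\|_{L_p}=\max_{f^*\in K_p}\text{Re\,}f^*(f)$ and states that combining it with \cite[Corollary~2.4]{power} ``and the proof of that corollary'' yields the result. Your expansion of what that phrase means---that the argument there only uses sublinearity, Lipschitzness, and the Szlenk-detection property of the support functional, all of which hold for the balanced set $K_p$---is precisely the adaptation the paper is invoking.
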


\begin{proposition} Suppose $T$ is a non-empty $B$-tree.  Suppose also that $(C_s)_{s\in T.X}\subset \mathcal{C}$ is fixed   and for $s=(\zeta_i, Z_i)_{i=1}^k\in T.X$, let $\lambda(s)=Z_k\cap C_s$. Suppose that $S$ is a non-empty, well-founded $B$-tree and $\theta:S.X\to T.X$ is a pruning.  For $s\in S.X$, let $\mathfrak{s}(s)= \prod_{i=1}^{|s|} \lambda(\theta(s|_i))$.   If $\ee>0$ is such that for every $t\in S.X$, $\mathfrak{s}(s)\in \mathcal{H}^K_\ee\neq \varnothing$, then for any $0<\delta<\ee$, any $0\leqslant \gamma<o(S)$, and any $s\in S^\gamma.X$, $\mathfrak{s}(s)\in \mathcal{H}^{s_\delta^\gamma(K)}_\ee \neq \varnothing$.  Moreover, for any $0<\delta<\ee$, $Sz(K, \delta)>o(S)$.

\label{technical}
\end{proposition}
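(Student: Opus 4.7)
The plan is to prove the main assertion by transfinite induction on $\gamma$, and to handle the ``moreover'' by running one extra instance of the successor-step construction. Two compactness facts will carry the whole argument: $K$ is $w^*$-compact so each $s_\delta^\gamma(K)$ is $w^*$-closed, and for each fixed $s\in S.X$ the set $\mathfrak{s}(s)=\prod_{i=1}^{|s|}\lambda(\theta(s|_i))$ is norm-compact and is independent of anything appended to $s$.

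The base case $\gamma=0$ is exactly the hypothesis, since $s_\delta^0(K)=K$. For the limit step, fix $s\in S^\gamma.X\subset\bigcap_{\zeta<\gamma}S^\zeta.X$, apply the inductive hypothesis at each $\zeta<\gamma$ to obtain $y^\zeta\in s_\delta^\zeta(K)$ and $(x_i^\zeta)_{i=1}^{|s|}\in\mathfrak{s}(s)$ with $\text{Re\ }y^\zeta(x_i^\zeta)\geqslant\ee$, and then pass to a common cofinal subnet along which $y^\zeta\to y$ in $w^*$ and $(x_i^\zeta)\to(x_i)$ in norm. Cofinality together with the $w^*$-closedness of every $s_\delta^\zeta(K)$ forces $y\in\bigcap_{\zeta<\gamma}s_\delta^\zeta(K)=s_\delta^\gamma(K)$, and the scalar inequalities pass to the limit.

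The successor step is the main technical piece. Given $s=(\zeta_i,Z_i)_{i=1}^k\in S^{\gamma+1}.X$, the tree coordinates $(\zeta_i)$ are non-maximal in $S^\gamma$, so I fix $\zeta$ with $(\zeta_i)\cat\zeta\in S^\gamma$ and note that $s_Z:=s\cat(\zeta,Z)\in S^\gamma.X$ for every $Z\in\text{codim}(X)$; the inductive hypothesis then supplies $y^Z\in s_\delta^\gamma(K)$ and $(x_i^Z)_{i=1}^{k+1}$ witnessing $\mathfrak{s}(s_Z)\in\mathcal{H}^{s_\delta^\gamma(K)}_\ee$. Since $s_Z|_i=s|_i$ for $i\leqslant k$, the first $k$ vectors live in the $Z$-independent compact $\mathfrak{s}(s)$, while the pruning clause places $x_{k+1}^Z$ in $Z\cap B_X$. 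I would then refine along an ultrafilter on $\text{codim}(X)$ that dominates the filter generated by the tails $\{Z':Z'\subset Z_0\}$; along it, $y^Z\to x^*\in s_\delta^\gamma(K)$ in $w^*$ and $(x_i^Z)_{i=1}^k\to(x_i)\in\mathfrak{s}(s)$ in norm. The decisive observation is that $\ker x^*$ is codim-finite, so eventually $Z\subset\ker x^*$ and hence $x^*(x_{k+1}^Z)=0$; combined with $\text{Re\ }y^Z(x_{k+1}^Z)\geqslant\ee$ and $\|x_{k+1}^Z\|\leqslant 1$, this forces $\|y^Z-x^*\|\geqslant\ee>\delta$ eventually. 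Together with the $w^*$-convergence, that places $x^*$ in $s_\delta(s_\delta^\gamma(K))=s_\delta^{\gamma+1}(K)$, while the mixed $w^*$/norm limit argument gives $\text{Re\ }x^*(x_i)\geqslant\ee$, completing $\mathfrak{s}(s)\in\mathcal{H}^{s_\delta^{\gamma+1}(K)}_\ee$.

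For the moreover, the main assertion already delivers $Sz(K,\delta)\geqslant o(S)$; strict inequality in the limit case is automatic because $Sz(K,\delta)$ is never a limit ordinal. When $o(S)=\gamma_0+1$, I would run the successor-step construction one more time, but now by varying the last $Z$-entry rather than appending a new coordinate: pick any $s=(\zeta_i,Z_i)_{i=1}^k\in S^{\gamma_0}.X$ and, for each $Z\in\text{codim}(X)$, let $s_Z$ agree with $s$ except that $Z_k$ is replaced by $Z$. Each $s_Z$ still lies in $S^{\gamma_0}.X$ since the $Z$-entries in the definition are arbitrary, so the main assertion at $\gamma_0$ applies and the same ultrafilter argument produces $x^*\in s_\delta^{\gamma_0+1}(K)=s_\delta^{o(S)}(K)$. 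The principal obstacle throughout is arranging a single subnet that simultaneously witnesses $w^*$-convergence of $(y^Z)$ and cofinality in the shrinking-$Z$ direction; an ultrafilter refining the filter of cofinite-codimension subspaces is the cleanest device I know for securing both at once.
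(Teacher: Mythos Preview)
Your proof is correct and follows essentially the same route as the paper's: transfinite induction on $\gamma$, with the successor step using the pruning condition to place the extra vector $x_{k+1}^Z$ in $W_Z\subset Z$, then passing to a limit and exploiting that $\ker x^*$ is finite-codimensional to get $\|y^Z-x^*\|\geqslant\ee$; the ``moreover'' is handled exactly as in the paper by re-running the successor argument at level $\gamma_0$. The only cosmetic difference is that you use an ultrafilter refining the filter of tails $\{Z':Z'\subset Z_0\}$, while the paper takes a point in the intersection of the closures $\bigcap_Z\overline{\{(x_1^Y,\ldots,x^*_Y):Y\leqslant Z\}}$ in the compact space $\mathfrak{s}(s)\times K$; these devices are interchangeable.
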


\begin{proof} We induct on $\gamma$. The base case is the hypothesis.    Assume $\gamma+1<o(S)$ and the result holds for $\gamma$.  Assume $s\in S^{\gamma+1}.X$, which means there exists $\zeta$ such that $s\cat (\zeta, Z)\in S^\gamma.X$ for all $Z\in \text{codim}(X)$. Then for every $Z\in \text{codim}(X)$, there exists $Z\geqslant W_Z\in \text{codim}(X)$ such that $\mathfrak{s}(s\cat(\zeta, Z))\subset \mathfrak{s}(s)\times B_{W_Z}$.  From this and the inductive hypothesis, for every $Z\in \text{codim}(X)$, we fix $x_Z\in B_{W_Z}$,  $(x^Z_i)_{i=1}^{|s|}\in \mathfrak{s}(s)$, and $x^*_Z\in s_\delta^\gamma(K)$ such that $\text{Re\ }x^*_Z(x_Z)\geqslant \ee$ and $\text{Re\ }x^*_Z(x^Z_i)\geqslant \ee$ for each $1\leqslant i\leqslant |s|$.   By compactness of $\mathfrak{s}(s)\times K$ with the product topology, where $\lambda(\theta(s|_i))$ has its norm topology and $K$ has its $w^*$-topology, $$\varnothing \neq \bigcap_{Z\in \text{codim}(X)}\overline{\{(x^Y_1, \ldots, x^Y_{|s|}, x^*_Y):Z\geqslant Y\in \text{codim}(X)\}}\subset \mathfrak{s}(s)\times K.$$  Fix $(x_1, \ldots, x_{|s|}, x^*)$ lying in this intersection.  Obviously $x^*\in s_\delta^\gamma(K)$.  Moreover, for any $w^*$-neighborhood $V$ of $x^*$, there exists $Z\in \text{codim}(X)$ such that $\ker(x^*)\subset Z$ and $x^*_Z\in V$, whence $$\text{diam}(s^\gamma_\delta(K)\cap V) \geqslant \|x^*_Z-x^*\|\geqslant \text{Re\ }(x^*_Z-x^*)(x_Z)= \text{Re\ }x^*_Z(x_Z)\geqslant \ee>\delta.$$  This implies $x^*\in s^{\gamma+1}_\delta(K)$.  It is obvious that $\text{Re\ }x^*(x_i)\geqslant \ee$ for all $1\leqslant i\leqslant |s|$.  This shows that $\mathfrak{s}(s)\in \mathcal{H}_\ee^{s^{\gamma+1}_\delta(K)}$ and completes the successor case.

Finally, assume $\gamma<o(S)$ is a limit ordinal and the result holds for all ordinals less than $\gamma$.  Fix $s\in S^\gamma.X$ and let $\mathfrak{s}(s)\times K$ be topologized as in the successor case. By the inductive hypothesis, for all $\beta<\gamma$, there exists $(x^\beta_1, \ldots, x^\beta_{|s|}, x^*_\beta)\in \mathfrak{s}(s)\times K$ such that $x^*_\beta\in s^\beta_\ee(K)$ and for all $1\leqslant i\leqslant |s|$, $\text{Re\ }x^*_\beta(x_i^\beta)\geqslant \ee$.     By compactness of $\bigl(\prod_{i=1}^{|s|} \lambda(\theta(s|_i))\bigr)\times K$, $$\bigcap_{\beta<\gamma}\overline{\{(x^\mu_1, \ldots, x^\mu_{|s|}, x^*_\mu): \mu\geqslant \beta\}}\neq \varnothing.$$   Clearly any $(x_1, \ldots, x_{|s|}, x^*)$ lying in this intersection is such that $x^*\in s^\gamma_\delta(K)$ and for any $1\leqslant i\leqslant |s|$, $\text{Re\ }x^*(x_i)\geqslant \ee$.   This shows that $\mathfrak{s}(s)\in \mathcal{H}_\ee^{s^\gamma_\delta(K)}$ and completes the induction.

We have shown that for any $0<\delta<\ee$, $Sz(K, \delta)\geqslant o(S)$.   If $o(S)$ is a limit ordinal, we deduce that $Sz(K, \delta)>o(S)$ since $Sz(K, \delta)$ cannot be a limit ordinal.     If $o(S)$ is a successor, say $o(S)=\xi+1$, then there exists a length $1$ sequence $(\zeta)\in S^\xi$.  For every $Z\in \text{codim}(X)$, $\mathfrak{s}((\zeta, Z))=W_Z\cap C_{\theta((\zeta, Z))}$ for some $W\subset Z$.  The first part of the proof yields that for each $Z\in \text{codim}(X)$, there exists $x_Z\in W_Z\cap C_{\theta((\zeta, Z))}\subset W_Z\cap B_X$ and some $x^*_Z\in s_\zeta^\xi(K)$ such that $\text{Re\ }x^*_Z(x_Z)\geqslant \ee$.    Arguing as in the successor case, we deduce that any $w^*$-limit of a subnet of $(x^*_Z)_{Z\in \text{codim}(X)}$ lies in $s^{\xi+1}_\delta(K)$, whence $Sz(K, \delta)>\xi+1=o(S)$.

\end{proof}

\subsection{Games}

Suppose $T\subset \Lambda^{<\nn}$ is a well-founded, non-empty $B$-tree and $\mathcal{E}\subset MAX(T.X.\mathcal{C})$ is some subset.  We define the \emph{game on} $T.X.\mathcal{C}$ \emph{with target set} $\mathcal{E}$.    Player I first chooses $(\zeta_1, Z_1)\in \Lambda\times \text{codim}(X)$ such that $(\zeta)\in T$ and Player II then chooses $C_1\in \mathcal{C}$.  Assuming $(\zeta_i, Z_i)_{i=1}^n\in T.X$ and $C_1, \ldots, C_n\in \mathcal{C}$ have been chosen, the game terminates if $(\zeta_i, Z_i)_{i=1}^n\in MAX(T.X)$.  Otherwise Player I chooses $(\zeta_{n+1}, Z_{n+1})\in \Lambda\times \text{codim}(X)$ such that $(\zeta_i)_{i=1}^{n+1}\in T$ and Player II chooses $C_{n+1}\in \mathcal{C}$.  Since $T$ is well-founded, this game must terminate after finitely many steps.  Suppose that the resulting choices are $(\zeta_i, Z_i)_{i=1}^n$ and $C_1, \ldots, C_n\in \mathcal{C}$.  We say that Player II wins if $(\zeta_i, Z_i, C_i)_{i=1}^n\in \mathcal{E}$, and Player I wins otherwise.  

A \emph{strategy for Player} I for the game on $T.X.\mathcal{C}$ with target set $\mathcal{E}$ is a function $\psi:T'.X.\mathcal{C}\cup \{\varnothing\}\to \Lambda\times \text{codim}(X)$ such that if $\psi((\zeta_i, Z_i, C_i)_{i=1}^{n-1})=(\zeta_n, Z_n)$, $(\zeta_i)_{i=1}^n\in T$.   We say $\psi$ is a \emph{winning strategy for Player} I provided that for any sequence $(\zeta_i, Z_i, C_i)_{i=1}^n\in MAX(T.X.\mathcal{C})$ such that $(\zeta_i, Z_i)=\psi((\zeta_j, Z_j, C_j)_{j=1}^{i-1})$ for every $1\leqslant i\leqslant n$, $(\zeta_i, Z_i, C_i)_{i=1}^n \notin \mathcal{E}$.

A \emph{strategy for Player} II for the game on $T.X.\mathcal{C}$ with target set $\mathcal{E}$ is a function $\psi$ defined on the set \begin{align*} \{((\zeta_i, Z_i, C_i)_{i=1}^{n-1}, (\zeta_n, Z_n)): & (\zeta_i, Z_i, C_i)_{i=1}^{n-1}\in \{\varnothing\}\cup T.X.\mathcal{C}, (\zeta_n, Z_n)\in \Lambda\times \text{codim}(X),  \\ & (\zeta_i)_{i=1}^n\in T\}\end{align*} and taking values in $\mathcal{C}$.   We say $\psi$ is a \emph{winning strategy for Player} II provided that for any sequence $(\zeta_i, Z_i, C_i)_{i=1}^n\in MAX(T.X.\mathcal{C})$ such that $C_i=\psi((\zeta_j, Z_j, C_j)_{j=1}^{i-1}, (\zeta_i, Z_i))$ for all $1\leqslant i\leqslant n$, $(\zeta_i, Z_i, C_i)_{i=1}^n\in \mathcal{E}$.

\begin{proposition}\cite[Proposition $3.1$]{C2} For any non-empty, well-founded $B$-tree $T$ and any $\mathcal{E}\subset T.X.\mathcal{C}$, either Player I or Player II has a winning strategy for the game on $T.X.\mathcal{C}$ with target set $\mathcal{E}$.

\end{proposition}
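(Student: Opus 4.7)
The plan is to prove determinacy by a Zermelo-style backward induction along the tree structure of $T$, defining ``Player II wins from position $p$'' by transfinite recursion on the tree rank of $p$ and then extracting strategies from the resulting labeling. Since $T$ is well-founded, each $t\in T$ carries a rank $r(t)$, the unique ordinal with $t\in T^{r(t)}\setminus T^{r(t)+1}$; in particular $r(t)=0$ precisely when $t\in MAX(T)$, and $r$ strictly decreases along any $\prec$-chain.

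I will declare an extended position $p=(\zeta_i, Z_i, C_i)_{i=1}^n \in \{\varnothing\}\cup T.X.\mathcal{C}$ to be \emph{auspicious} by transfinite induction on $r((\zeta_i)_{i=1}^n)$, assigning the empty tuple rank $o(T)$: if $(\zeta_i)_{i=1}^n \in MAX(T)$, then $p$ is auspicious iff $p\in \mathcal{E}$; otherwise, $p$ is auspicious iff for every legal Player I move $(\zeta_{n+1}, Z_{n+1})\in \Lambda\times \text{codim}(X)$ with $(\zeta_i)_{i=1}^{n+1}\in T$, there exists $C_{n+1}\in \mathcal{C}$ such that $p\cat(\zeta_{n+1}, Z_{n+1}, C_{n+1})$ is auspicious. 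Because a one-round extension strictly drops rank, the recursion is well-posed. If the empty position is auspicious, the axiom of choice supplies, at each auspicious partial play and each Player I response, a witnessing $C_{n+1}$, defining a Player II strategy $\psi$; induction on remaining rank shows that every play conforming to $\psi$ remains auspicious, and therefore terminates at an auspicious maximal position, which by definition lies in $\mathcal{E}$. If the empty position is not auspicious, negating the defining clause produces, at each non-auspicious partial play, a Player I move for which \emph{every} Player II response leads back to a non-auspicious position; choosing one such move at each stage yields a Player I strategy whose every conforming play terminates outside $\mathcal{E}$.

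The only real obstacle I anticipate is bookkeeping around the three phases of a single round (before Player I's move, after Player I's move, after Player II's response), which must be aligned with the paper's precise indexing conventions for strategies of each player, and making sure the axiom-of-choice extraction is performed uniformly across all partial plays so that $\psi$ (or the Player I analogue) really is a function of the form prescribed by the paper. No deeper difficulty arises: well-foundedness of $T$ rules out infinite branches and reduces the claim to the standard Zermelo theorem for finite games of perfect information.
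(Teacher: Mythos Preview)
The paper does not supply its own proof of this proposition; it simply cites \cite[Proposition~3.1]{C2} and moves on. So there is no in-paper argument to compare against.

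Your argument is the standard Zermelo/Gale--Stewart backward-induction proof of determinacy for games on well-founded trees, and it is correct. The paper already records that $(T.X.\mathcal{C})^\gamma = T^\gamma.X.\mathcal{C}$, so the rank function you use is well defined and strictly decreases along extensions, making the recursion sound. Your handling of the two cases (empty position auspicious versus not) and the appeal to choice to extract the strategies are exactly what is needed. The bookkeeping worry you raise is real but minor: the paper's strategy functions for Player~I are defined on $T'.X.\mathcal{C}\cup\{\varnothing\}$ (full triples after Player~II's last move), and for Player~II on pairs consisting of a full triple history together with Player~I's current move, so your ``non-auspicious'' labeling should be carried on full-triple positions $p$ while the witnessing Player~I move at $p$ is what $\psi(p)$ returns. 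With that alignment your construction matches the required signatures.
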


\begin{proposition} Suppose that Player II has a winning strategy for a game on $T.X.\mathcal{C}$ with target set $\mathcal{E}$.   Then there exists $(C_s)_{s\in T.X}\subset \mathcal{C}$ such that for every $t=(\zeta_i, Z_i)_{i=1}^k\in MAX(T.X)$, $(\zeta_i, Z_i, C_{t|_i})_{i=1}^k\in \mathcal{E}$.

\label{games}
\end{proposition}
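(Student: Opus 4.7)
The plan is to extract the family $(C_s)_{s\in T.X}$ directly from Player II's winning strategy $\psi$ by a straightforward induction on the length of $s$. The reason this works is that $\psi$'s output at step $i$ depends only on the play up through step $i$; so if we \emph{pretend} Player I's sequence of moves happens to be $s$ itself, then $\psi$ tells us exactly what $C$ to assign at each prefix of $s$, and these assignments are forced to be consistent across different branches sharing a common prefix.

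More precisely, I would define $C_s\in\mathcal{C}$ for $s=(\zeta_i,Z_i)_{i=1}^k\in T.X$ by induction on $k$. For $k=1$, set
\[
C_s=\psi(\varnothing,(\zeta_1,Z_1)),
\]
using the empty history as the $\varnothing$-case of the strategy's domain. For $k>1$, assuming $C_{s|_j}$ is already defined for every $j<k$ (this is legitimate because each such $s|_j$ has length strictly less than $k$ and lies in $T.X$ since $T$ is a $B$-tree), set
\[
C_s=\psi\bigl((\zeta_j,Z_j,C_{s|_j})_{j=1}^{k-1},(\zeta_k,Z_k)\bigr).
\]
This is well-defined because $(\zeta_i)_{i=1}^k\in T$, which is exactly the compatibility condition required for $\psi$ to be applied to this argument.

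Now I would verify the conclusion. Fix $t=(\zeta_i,Z_i)_{i=1}^k\in MAX(T.X)$ and put $C_i:=C_{t|_i}$. By the very definition of $C_{t|_i}$, we have
\[
C_i=C_{t|_i}=\psi\bigl((\zeta_j,Z_j,C_{t|_j})_{j=1}^{i-1},(\zeta_i,Z_i)\bigr)=\psi\bigl((\zeta_j,Z_j,C_j)_{j=1}^{i-1},(\zeta_i,Z_i)\bigr)
\]
for every $1\leqslant i\leqslant k$. In other words, the sequence $(\zeta_i,Z_i,C_i)_{i=1}^k$ is precisely the play that results when Player I chooses $(\zeta_i,Z_i)$ at step $i$ and Player II responds according to $\psi$. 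Since $\psi$ is a winning strategy for Player II and the play reaches a maximal node, $(\zeta_i,Z_i,C_i)_{i=1}^k\in\mathcal{E}$, which is exactly the desired conclusion.

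There is no real obstacle here; the only thing to watch is that the inductive definition does not accidentally make $C_s$ depend on information past $s$ — it does not, since $\psi$'s input at step $|s|$ involves only prefixes of $s$. The content of the proposition is simply that a strategy, which a priori depends on the entire history, can be ``unrolled'' along the tree into a node-indexed family because different branches through the same node $s$ share the same prefix and therefore generate the same $\psi$-response at $s$.
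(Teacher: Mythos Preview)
Your proof is correct and follows essentially the same approach as the paper's: define $C_s$ by induction on $|s|$ using the winning strategy $\psi$, then observe that along any maximal branch the resulting play is exactly a play according to $\psi$ and therefore lands in $\mathcal{E}$. Your write-up is in fact more explicit than the paper's (which abbreviates the history of triples somewhat loosely), but the argument is the same.
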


\begin{proof} Fix a winning strategy $\psi$ for Player II in the game.   We define $C_s$ by induction on $|s|$.  We let $C_{(\zeta, Z)}=\psi(\varnothing, (\zeta,Z))$.  If $|s|=k+1$, $C_{s|_i}$ has been defined for every $1\leqslant i\leqslant k$, and $s=s|_k\cat (\zeta, Z)$, we let $C_s=\psi(s|_k, (\zeta, Z))$.

\end{proof}

For the next proposition, if $h\in L_p(X)$ is a simple function, we let $\overline{h}$ be the function in $L_p(X)$ such that $\overline{h}(\varpi)=0$ if $h(\varpi)=0$ and $\overline{h}(\varpi)=h(\varpi)/\|h(\varpi)\|$ otherwise. 

\begin{proposition} Let $\xi$ be an ordinal, $n$ a natural number,  and let $T$ be a $B$-tree with $o(T)\geqslant \omega^{1+\xi} n$.  If $\psi$ is a strategy for Player I for some game on $\Gamma_{\xi, n}.X.\mathcal{C}$, then for any $1<p<\infty$,  any $\delta>0$, and any normally weakly null $(f_t)_{t\in T.L_p(X)}\subset B_{L_p(X)}$, there exist $s=(\zeta_i, Z_i)_{i=1}^k\in MAX(\Gamma_{\xi,n}.X)$, $\varnothing=t_0\prec t_1\prec \ldots \prec t_k\in T.L_p(X)$, $g_i\in \text{\emph{co}}(f_u: t_{i-1}\prec u\preceq t_i)$, $h_i\in B_{L_p(X)}$, and $C_i\in \mathcal{C}$ such that for every $1\leqslant i\leqslant k$,  \begin{enumerate}[(i)]\item $h_i$ is simple, \item  $\text{\emph{range}}(\overline{h}_i)= C_i\subset B_{Z_i}$, \item $\|g_i-h_i\|_{L_p(X)}<\delta$, \item $(\zeta_i, Z_i)= \psi((\zeta_j, Z_j, C_j)_{j=1}^{i-1})$. 

\end{enumerate}

\label{prop8}

\end{proposition}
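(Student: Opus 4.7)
The plan is to play out Player I's strategy $\psi$ one round at a time, producing $(t_i, g_i, h_i, C_i)$ at round $i$ by a Mazur-style convex-combination argument inside $T.L_p(X)$. At round $i$, $\psi$ delivers $(\zeta_i, Z_i)$ based on the history $(\zeta_j, Z_j, C_j)_{j<i}$; I must then find $t_i \succ t_{i-1}$ in $T.L_p(X)$, a convex combination $g_i$ of the $f_u$ with $t_{i-1} \prec u \preceq t_i$, and a simple $h_i \in B_{L_p(X)}$ satisfying $\text{range}(\overline{h}_i) \subset B_{Z_i}$ and $\|g_i - h_i\|_{L_p(X)} < \delta$, and then feed $C_i := \text{range}(\overline{h}_i) \in \mathcal{C}$ back into $\psi$ so the play continues. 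Since $\Gamma_{\xi,n}$ is well-founded, the play terminates after finitely many rounds at some $s = (\zeta_i, Z_i)_{i=1}^k \in MAX(\Gamma_{\xi,n}.X)$, giving the required tuple.

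The technical core is a one-step lemma. Fix a finite-dimensional complement $V$ of $Z_i$ in $X$ with bounded projection $P : X \to V$, and let $\phi_1, \ldots, \phi_m \in X^*$ annihilate $Z_i$ and form a dual basis for $V$. Fix a countable family $(A_\ell)_{\ell=1}^\infty$ of measurable subsets of $[0,1]$ dense in the measure algebra (e.g., finite unions of dyadic intervals). Build a chain $t_{i-1} \prec u_1 \prec u_2 \prec \cdots \prec u_N$ in $T.L_p(X)$ where at step $k$ the subspace $W_k \in \text{codim}(L_p(X))$ is chosen inside $\bigcap_{\ell, j \leqslant k} \ker(\mathbf{1}_{A_\ell} \phi_j)$, using that each $\mathbf{1}_{A_\ell} \phi_j \in L_q(X^*) \subset L_p(X)^*$. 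By normal weak nullity, $\int_{A_\ell} \phi_j(f_{u_k}) = 0$ for $\ell, j \leqslant k$; combined with the density of $(A_\ell)$ and the uniform bound $\|f_{u_k}\|_{L_p(X)} \leqslant 1$, this forces $(\phi_j(f_{u_k}))_k$ to be weakly null in $L_p$ for each $j$, hence $(\widehat{P} f_{u_k})_k \subset L_p(V)$ is weakly null, where $\widehat{P} f(\varpi) := P(f(\varpi))$. By Mazur, some convex combination $g_i$ of $f_{u_1}, \ldots, f_{u_N}$ satisfies $\|\widehat{P} g_i\|_{L_p(X)} < \delta/4$. Then $(I - \widehat{P}) g_i \in L_p(Z_i)$, and after a small rescaling (to ensure the norm is at most $1$) followed by a dyadic conditional expectation, it is approximated by a simple $Z_i$-valued $h_i \in B_{L_p(X)}$ with $\|g_i - h_i\|_{L_p(X)} < \delta$. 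Set $t_i := u_N$ and $C_i := \text{range}(\overline{h}_i)$, a finite compact subset of $B_{Z_i}$.

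The main obstacle is ordinal bookkeeping: each one-step construction uses a finite initial chain of $T.L_p(X)$ below $t_{i-1}$, and one must guarantee that enough order remains for each subsequent round. The identity $\omega^{1+\xi} n = \omega \cdot \omega^\xi n$ is precisely what makes this possible: it allows one to reserve $\omega^\xi n$ macro-slots inside $T$ (matching the depth of $\Gamma_{\xi,n}$), each carrying an $\omega$-chain for one Mazur step. Concretely, at the outset one fixes an order-preserving embedding of a stratified skeleton into $T$ that reserves, below any potential $t_{i-1}$, a subtree of order at least $\omega$; with this setup the one-step lemma always applies and the iteration produces the required data satisfying (i)--(iv).
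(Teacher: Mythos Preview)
Your overall architecture---play out $\psi$ round by round, at each round run a one-step lemma to produce $(t_i,g_i,h_i,C_i)$, and use $\omega^{1+\xi}n=\omega\cdot\omega^\xi n$ for the ordinal bookkeeping---matches the paper exactly. The paper records the bookkeeping via the level function $\gamma(s)=\max\{\mu:s\in(\Gamma_{\xi,n}.X)^\mu\}$ and the inequality $\omega\gamma(s_{k-1})\geqslant\omega\gamma(s_k)+\omega$, which is your ``reserve a subtree of order $\geqslant\omega$ below each $t_{i-1}$.''

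The gap is in your one-step lemma. You build a finite chain $t_{i-1}\prec u_1\prec\cdots\prec u_N$ and then assert that $(\widehat{P}f_{u_k})_k$ is weakly null in $L_p(V)$ in order to apply Mazur. But weak nullity is a statement about an infinite sequence, and in the cases that matter $T$ is well-founded, so there is no infinite chain below $t_{i-1}$; all that is guaranteed is a subtree of order $\geqslant\omega$, i.e.\ arbitrarily long \emph{finite} chains, which need not nest. With only $u_1,\ldots,u_N$ you cannot speak of weak nullity, and the $N$ you would need from Mazur is only available \emph{after} weak nullity of an infinite sequence is established---which never happens. Your phrase ``each carrying an $\omega$-chain'' confirms you are tacitly assuming infinite branches.

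The paper sidesteps this with a different one-step argument: it notes that $L_p(X)/L_p(Z_i)$ is either zero or isomorphic to $L_p$, hence has Szlenk index $\leqslant\omega$, and then invokes a general fact (from \cite{C2}) that this Szlenk bound together with $o(U)\geqslant\omega$ for the subtree $U$ of extensions of $t_{i-1}$ already yields some $t_i$ and $g\in\text{co}(f_u:t_{i-1}\prec u\preceq t_i)$ with $\text{dist}(g,L_p(Z_i))<\delta$; density of simple functions then gives $h_i$. This Szlenk-derivation argument is genuinely tree-based and does not require infinite chains. Your Mazur route can be salvaged, but only by replacing the sequential weak-nullity step with such a tree-index argument, at which point you are essentially reproving the black box the paper cites.
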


\begin{rem}\upshape For a $B$-tree $S$ on $\Lambda$ and $s\in S$, we let $S(s)$ denote those non-empty sequences $u\in \Lambda^{<\nn}$ such that $s\cat u\in S$.  An easy induction argument yields that for any ordinals $\xi, \zeta$,  $S^\xi(s)=(S(s))^\xi$ for any ordinal $\xi$.   From this it follows that $s\in S^\xi$ if and only if $o(S(s))\geqslant \xi$. Furthermore, another easy induction yields that if $(S^\xi)^\zeta=S^{\xi+\zeta}$, from which it follows that if $o(S)\geqslant \xi+\zeta$, $o(S^\xi)\geqslant \zeta$.  Therefore if $s\in S^{\xi+\omega}$, $o(S^\xi(s))\geqslant \omega$.

\end{rem}

\begin{proof}[Proof of Proposition \ref{prop8}] We first note that if $Z\in \text{codim}(X)$, $L_p(X)/L_p(Z)$ is either the zero vector space or isomorphic to $L_p$, and therefore has Szlenk index not exceeding $\omega$.  As explained in \cite{C2}, this means that for any $B$-tree $T$ with $o(T)\geqslant \omega$, any $\delta>0$, and any normally weakly null $(f_t)_{t\in T.L_p(X)}\subset B_{L_p(X)}$, there exist $t\in T.L_p(X)$, $g\in \text{co}(f_s: \varnothing\prec s\preceq t)$, and $h\in B_{L_p(Z)}$ such that $\|g-h\|_{L_p(X)}<\delta$.   Moreover, by the density of simple functions, we may assume this $h$ is simple. 

Let $\psi$ be a strategy for Player I for a game on $\Gamma_{\xi,n}.X.\mathcal{C}$.  Let $T$ be a $B$-tree with $o(T)=\omega^{1+\xi}n$ and define $\gamma:\Gamma_{\xi,n}.X  \cup \{\varnothing\}\to [0, \omega^\xi n]$ by letting $\gamma(t)=\max\{\mu\leqslant \omega^\xi n: t\in (\Gamma_{\xi,n}.X)^\mu\}$ for $t\in \Gamma_{\xi,n}.X$ and $\gamma(\varnothing)=\omega^\xi n$.  Let $s_0=t_0=\varnothing$.  Now assume that for some $k\in \nn$ and all $1\leqslant i<k$, $s_i\in \Gamma_{\xi,n}.X$, $\zeta_i\in [0, \omega^\xi n]$, $Z_i\in \text{codim}(X)$, $t_i\in T.L_p(X)$, $g_i, h_i\in B_{L_p(X)}$, and $C_i\in \mathcal{C}$ have been chosen such that for all $1\leqslant i<k$, \begin{enumerate}[(i)]\item $h_i$ is simple, \item $s_i=(\zeta_j, Z_j)_{j=1}^i$, \item $t_0\prec t_1\prec \ldots \prec t_{k-1}$, \item $t_i\in (T.L_p(X))^{\omega \gamma(s_i)}$, \item $(\zeta_i, Z_i)=\psi((\zeta_j, Z_j, C_j)_{j=1}^{i-1})$, \item $g_i\in \text{co}(f_u: t_{i-1}\prec u\preceq t_i)$, \item $\|g_i-h_i\|_{L_p(X)}<\delta$, \item $\text{range}(\overline{h}_i)=C_i\subset B_{Z_i}$.  \end{enumerate}    If $s_{k-1}$ is maximal in $\Gamma_{\xi,n}.X$, we let $s=s_{k-1}$, and one easily checks that the conclusions are satisfied.   Otherwise let $(\zeta_k, Z_k)=\psi((\zeta_j, Z_j, C_j)_{j=1}^{k-1})$ and $s_k=s_{k-1}\cat (\zeta_k, Z_k)$.   Let $u_{k-1}$ be the sequence of first members of the pairs of $t_{k-1}$ and let $U$  denote the proper extensions of $u_{k-1}$ in $T^{\omega \gamma(s_k)}$.  Then $(f_{t_{k-1}\cat u})_{u\in U.L_p(X)}\subset B_{L_p(X)}$ is normally weakly null and $o(U)\geqslant \omega$ by the remark preceding the proof, so that the previous paragraph yields the existence of some $u'\in U.L_p(X)$, $g_k\in \text{co}(f_u: t_{k-1}\prec u\preceq t_{k-1}\cat u')$, and some simple function $h_k\in L_p(Z_k)$ such that $\|g_k-h_k\|_{L_p(X)}<\delta$. Let $t_k=t_{k-1}\cat u'$. In order to apply the remark before the proof, we note that since $s_{k-1}\prec s_k$, $\gamma(s_{k-1})\geqslant \gamma(s_k)+1$.  Since $$\omega \gamma(s_{k-1})\geqslant \omega(\gamma(s_k)+1) = \omega \gamma(s_k)+\omega, $$ the remark preceding the proof applies.  Note that $C_k:=\text{range}(\overline{h}_k)\subset B_{Z_k}$.   This completes the recursive construction.  Since $\Gamma_{\xi,n}.X$ is well-founded, eventually this process terminates.  The resulting $s=(\zeta_i, Z_i)_{i=1}^k\in MAX(\Gamma_{\xi,n}.X)$ clearly satisfies the conclusions.

\end{proof}

\section{Definition of an associated space and two games}

\subsection{The associated space and its properties}

If $E$ is a vector space with seminorm $\|\cdot\|$, we say a sequence $(e_i)_{i=1}^n$ in $E$ is $1$-\emph{unconditional} provided that for any scalars $(a_i)_{i=1}^n$ and any $(\ee_i)_{i=1}^n \in \{\pm 1\}^n$, $\|\sum_{i=1}^n \ee_i a_i e_i\|=\|\sum_{i=1}^n a_i e_i\|$.   Recall that for $1<p<\infty$, a vector space  $E$ with seminorm $\|\cdot\|$ which is spanned by the  $1$-unconditional basis $(e_i)_{i=1}^n$ is called $p$-\emph{concave} provided there exists a constant $C$ such that for any $(f_i)_{i=1}^n\subset L_p$, $$\|\sum_{i=1}^n f_i e_i\|_{L_p(E)}\leqslant C\|\sum_{i=1}^n \|f_i\|_{L_p}e_i\|_E.$$  The smallest such constant $C$ is denoted by $M_{(p)}(E)$.

Given $x\in \text{span}(e_i: 1\leqslant i\leqslant n)$, where $(e_i)_{i=1}^n$ is a Hamel basis for the seminormed space $E$, we write $x=\sum_{i=1}^n a_i e_i$ and $\supp(x)=\{i\leqslant n: a_i \neq 0\}$.  We say the vectors $x_1, \ldots, x_n\in \text{span}(e_i: 1\leqslant i\leqslant n)$ are \emph{disjointly supported} if the sets $\supp(x_1)$, $\ldots$, $\supp(x_n)$ are pairwise disjoint. 

For $1<\beta<\infty$, we say that an unconditional Hamel basis $(e_i)_{i=1}^n$ for a seminormed space $E$ \emph{satisfies an} $1$-\emph{lower} $\ell_\beta$ \emph{estimate} provided that for any $m\in \nn$ and any disjointly supported elements $(x_i)_{i=1}^m\subset E$, $$\bigl(\sum_{i=1}^m \|x_i\|^\beta\bigr)^{1/\beta} \leqslant  \|\sum_{i=1}^m x_i\|.$$

\begin{theorem}\cite[Theorem $1.$f$.7$]{LT} Fix $1<\beta<p<\infty$. There exists a constant $C'=C'(\beta,p)$ such that if $(e_i)_{i=1}^n$ is a $1$-unconditional basis for the seminormed space $E$ which satisfies a $1$-lower $\ell_\beta$ estimate,  then $E$ is $p$-concave and $M_{(p)}(E)\leqslant C'$.   
\label{LT}
\end{theorem}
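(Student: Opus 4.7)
This statement is Theorem $1$.f.$7$ of the Lindenstrauss--Tzafriri volume on classical Banach sequence spaces, so my plan is to follow their classical argument. The proof combines a dyadic level-set decomposition with a geometric-series summation that converges precisely because $p > \beta$, and uses a duality step to convert the lower $\ell_\beta$-estimate (which a priori bounds $E$-norms of disjoint sums from below) into the upper bound on an $L_p(E)$-norm required for $p$-concavity.

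First I would perform the standard reductions. By $1$-unconditionality of $(e_i)$, one has $\|\sum_i f_i(\omega) e_i\|_E = \|\sum_i |f_i(\omega)| e_i\|_E$ for every $\omega$, so we may assume each $f_i \geqslant 0$. By homogeneity, normalize so that $\|\sum_i \|f_i\|_{L_p} e_i\|_E = 1$, reducing the task to bounding $\|\sum_i f_i e_i\|_{L_p(E)}$ by a constant depending only on $\beta$ and $p$.

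Next, I would dualize. In the finite-dimensional setting with a $1$-unconditional basis, a standard duality turns $E$ into a Banach lattice whose dual also carries a $1$-unconditional (biorthogonal) basis, and the dictionary exchanges lower $\ell_\beta$-estimate on $E$ with upper $\ell_{\beta'}$-estimate on $E^*$ (where $1/\beta+1/\beta'=1$), and $p$-concavity of $E$ with $p'$-convexity of $E^*$ (where $1/p+1/p'=1$). It thus suffices to establish the equivalent dual statement: an upper $\ell_{\beta'}$-estimate with constant $1$ on a $1$-unconditional basis implies $p'$-convexity with constant $C'(\beta,p)$, whenever $1 < p' < \beta' < \infty$.

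The heart of the argument is a \emph{dyadic disjointification}. Given $y_1, \ldots, y_m \in E^*$ with nonnegative coordinates, decompose each $y_j$ by magnitude as $y_j = \sum_k y_{j,k}$, where $y_{j,k}$ is supported on the basis indices where the coordinates of $y_j$ lie in the dyadic range $[2^{k-1}, 2^k)$. For each fixed $k$ the ``$k$th layer'' has nonzero coordinates of comparable size, which allows the upper $\ell_{\beta'}$-estimate to control the combined contribution of $(y_{j,k})_j$, while the disjointness between layers at different $k$ allows the triangle inequality to sum them. Combining the layer estimates produces a geometric series with ratio $2^{p' - \beta'} < 1$, whose convergence is precisely what the hypothesis $p > \beta$ (equivalently $p' < \beta'$) ensures. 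This yields
$$\|(\sum_{j=1}^m |y_j|^{p'})^{1/p'}\|_{E^*} \leqslant C'(\beta,p) \bigl(\sum_{j=1}^m \|y_j\|_{E^*}^{p'}\bigr)^{1/p'},$$
establishing $p'$-convexity of $E^*$ and hence, by duality, $p$-concavity of $E$ with the same constant $C'(\beta,p)$.

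The main obstacle is coordinating the dyadic disjointification with the application of the upper $\ell_{\beta'}$-estimate so that the final constant depends only on $\beta$ and $p$ and not on $n$, $m$, or any specific feature of $E$. Getting the direction of every inequality right---upper versus lower estimates, convexity versus concavity, and the duality dictionary between them---is subtle but standard in Banach-lattice theory, and the finiteness of the geometric series is exactly equivalent to the strict hypothesis $\beta < p$.
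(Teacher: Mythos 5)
The paper does not prove this statement; the theorem environment carries a citation to Lindenstrauss--Tzafriri, Theorem 1.f.7, and no proof follows. So there is no argument in the paper to compare against, and what must be judged is whether your reconstruction of the LT argument is sound.

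Your strategic outline is right: reduce to nonnegative coordinates by unconditionality, dualize to trade the lower-$\ell_\beta$/concavity question for an upper-$\ell_{\beta'}$/convexity question (the duality dictionary you invoke is correct, and in the finite-dimensional $1$-unconditional setting no Krivine-calculus or order-continuity caveats arise), then decompose dyadically by coordinate magnitude and sum a geometric series whose convergence encodes $\beta<p$. But the central step is underspecified in a way that conceals the real difficulty. An upper $\ell_{\beta'}$-estimate governs only \emph{disjointly supported} vectors, and within a fixed dyadic layer $k$ the pieces $(y_{j,k})_j$ across different $j$ are in general not disjoint: several $y_j$ may carry comparable-size coordinates at the same basis index. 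As written, ``the upper $\ell_{\beta'}$-estimate controls the combined contribution of $(y_{j,k})_j$'' is therefore not a legitimate step, and naively replacing the overlapping family by a disjoint majorant costs a factor depending on the number of vectors, which destroys the constant $C'(\beta,p)$. You correctly flag ``coordinating the dyadic disjointification with the application of the upper $\ell_{\beta'}$-estimate'' as the main obstacle --- that is exactly where the argument is incomplete. The technical heart of the Lindenstrauss--Tzafriri proof is precisely the reduction that makes the disjoint-support hypothesis applicable (a coordinate-level disjointification of the lattice supremum together with a counting estimate on the level sets $\{j:\|y_j\|\approx 2^{-k}\}$), after which the geometric series $\sum_k 2^{-k(p-\beta)}$ you identify does the rest. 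So: right skeleton and right convergence criterion, but the step that legitimizes invoking the $\ell_{\beta'}$-estimate on each layer is missing.
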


For the remainder of this section, $T$ is a fixed, non-empty $B$-tree.   

For a non-empty set $J$, we let $c_{00}(J)$ be the span of the canonical Hamel basis $(e_j)_{j\in J}$ in the space of scalar-valued functions on $J$, where $e_j$ is the indicator of the singleton $\{j\}$.    We let $e^*_j$ denote the coordinate functional to $e_j$.    Given $x\in c_{00}(J)$, we may write $x=\sum_{j\in J}a_je_j$.  Then we define $|x|$ to be $\sum_{j\in J}|a_j|e_j$.   A \emph{suppression projection} is an operator $P$ from $\text{span}(e_j^*:j\in J)$ into itself such that there exists a subset $F$ of $J$ such that $P\sum_{j\in J}a_je_j^*= \sum_{j\in F}a_je_j^*$.

For $0<\phi< \theta<1$, let \begin{align*} N_{\theta, \phi,T}=\{0\}\cup \Bigl\{\theta \sum_{i=1}^k e^*_{t|_{j_i}}: &   \hspace{3mm}t=(\zeta_i, Z_i, C_i)_{i=1}^{|t|}\in T.X.\mathcal{C}, 1\leqslant j_1<\ldots <j_k\leqslant |t|, \\ & \hspace{3mm} \prod_{i=1}^k Z_{j_i}\cap C_{j_i}\in \mathcal{H}^K_\phi\Bigr\}\subset \text{span}(e^*_t: t\in T.X.\mathcal{C}).\end{align*}   

For $0<\phi<\theta<1$ and $1<\alpha<\infty$, let \begin{align*} M_{\theta, \phi, \alpha,T}=\Bigl\{\sum_{i=1}^k a_i g_i: &  g_i\in \cup_{n=1}^\infty N_{\theta^n, \phi^n,T}, a_i\geqslant 0, \sum_{i=1}^k a_i^\alpha\leqslant 1,  \supp(g_i) \text{\ are pairwise disjoint}\Bigr\}.\end{align*} Note that the set $M_{\theta, \phi, \alpha,T}$ is closed under suppression projections.   

We define the seminorm $\|\cdot\|_{\theta, \phi, \alpha,T}$ on $c_{00}(T.X.\mathcal{C})$ by $$\|x\|_{\theta, \phi, \alpha,T}=\sup\{f(|x|): f\in M_{\theta, \phi, \alpha, T}\}.$$

\begin{claim} Fix $1<\alpha<\infty$ and $0<\phi<\theta<1$. For any  $t\in T.X.\mathcal{C}$, $(e_{t|_i})_{i=1}^{|t|}$ is $1$-unconditional and satisfies a $1$-lower $\ell_\beta$ estimate in its span, where $1/\alpha+1/\beta=1$.   

\label{claim1}
\end{claim}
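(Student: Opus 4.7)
The claim splits into two assertions. \textbf{$1$-unconditionality} is immediate: the seminorm equals $\sup\{f(|x|):f\in M_{\theta,\phi,\alpha,T}\}$, which depends only on $|x|$, so the entire canonical basis $(e_s)_{s\in T.X.\mathcal{C}}$ is $1$-unconditional in $c_{00}(T.X.\mathcal{C})$, and hence so is every subcollection, in particular $(e_{t|_i})_{i=1}^{|t|}$.

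For the \textbf{$1$-lower $\ell_\beta$ estimate}, I fix disjointly supported $(x_i)_{i=1}^m$ in the span and discard those with $\|x_i\|=0$. The plan is a H\"older-type duality argument. For each $\ee>0$ and each $i$, I pick $f_i\in M_{\theta,\phi,\alpha,T}$ with $f_i(|x_i|)>\|x_i\|-\ee$. Using the closure of $M_{\theta,\phi,\alpha,T}$ under suppression projections already noted in the text, I replace $f_i$ by its suppression to $\supp(x_i)$; this leaves $f_i(|x_i|)$ unchanged and makes the $f_i$'s pairwise disjointly supported. I then set $S=\sum_k\|x_k\|^\beta$ and $c_i=\|x_i\|^{\beta-1}/S^{1/\alpha}$. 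The identity $\alpha(\beta-1)=\beta$ forces $\sum_i c_i^\alpha=1$ and $\sum_i c_i\|x_i\|=S^{1/\beta}$, so the candidate $f:=\sum_i c_i f_i$ would satisfy $f(\sum_i|x_i|)=\sum_i c_i f_i(|x_i|)>S^{1/\beta}-\ee\sum_i c_i$. Letting $\ee\to 0$ and invoking the unconditionality shown above to pass from $|x_i|$ to $x_i$ would then close the estimate.

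The \textbf{main obstacle} is to verify that $f$ actually lies in $M_{\theta,\phi,\alpha,T}$. Expanding each $f_i=\sum_{j=1}^{k_i}a_{i,j}g_{i,j}$ in canonical form and substituting gives $f=\sum_{i,j}c_i a_{i,j}g_{i,j}$. The $g_{i,j}$'s are pairwise disjointly supported: within a single $i$ by the definition of $f_i\in M_{\theta,\phi,\alpha,T}$, and across distinct $i$ by the disjoint supports of the $f_i$'s. The $\alpha$-sum constraint is verified by $\sum_{i,j}(c_i a_{i,j})^\alpha=\sum_i c_i^\alpha\sum_j a_{i,j}^\alpha\leq\sum_i c_i^\alpha=1$, placing $f$ in $M_{\theta,\phi,\alpha,T}$ and closing the plan.
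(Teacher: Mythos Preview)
Your proof is correct and follows essentially the same approach as the paper's: select near-norming functionals in $M_{\theta,\phi,\alpha,T}$ for each $x_i$, suppress them to the disjoint supports, and combine them with H\"older-dual coefficients. Your treatment is in fact more careful than the paper's on two points: you use an $\ee$-approximation rather than tacitly assuming the supremum is attained, and you explicitly verify that the combined functional $f=\sum_i c_i f_i$ lies in $M_{\theta,\phi,\alpha,T}$ by expanding each $f_i$ into its canonical form and checking both the disjoint-support and $\ell_\alpha$-ball conditions, whereas the paper simply asserts $g\in M_{\theta,\phi,\alpha,T}$.
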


\begin{proof} Note that $1$-unconditionality is obvious.  Fix $x_1, \ldots, x_n\in \text{span}(e_{t|_i}: 1\leqslant i \leqslant |t|)$ with disjoint supports.  That is, there exist pairwise disjoint subsets $S_1, \ldots, S_n$ of $\{1, \ldots, |t|\}$ such that $x_i\in \text{span}(e_{t|_j}: j\in S_i)$.   Then there exist $g_1, \ldots, g_n\in M_{\theta, \phi, \alpha,T}$ such that for each $1\leqslant i\leqslant n$, $g_i(|x_i|)=\|x_i\|_{\theta, \phi, \alpha,T}$. Since $M_{\theta, \phi, \alpha,T}$ is closed under suppression projections, we may assume that $\supp(g_i)\subset S_i$ for each $1\leqslant i\leqslant n$.  Then if $(a_i)_{i=1}^n$ are such that $\sum_{i=1}^n a_i^\alpha=1$, $a_i\geqslant 0$, and $\sum_{i=1}^n a_i\|x_i\|_{\theta, \phi, \alpha,T}=(\sum_{i=1}^n \|x_i\|_{\theta, \phi, \alpha,T}^\beta)^{1/\beta}$, $g:=\sum_{i=1}^n a_i g_i\in M_{\theta, \phi, \alpha,T}$ and $$\|\sum_{i=1}^n x_i\|_{\theta, \phi, \alpha,T} \geqslant g\bigl(\bigl|\sum_{i=1}^n x_i\bigr|\bigr) = \sum_{i=1}^n a_ig_i(|x_i|)=\bigl(\sum_{i=1}^n \|x_i\|_{\theta, \phi, \alpha,T}^\beta)^{1/\beta}.$$

\end{proof}

\begin{claim} Fix $1<\alpha <\infty$ and $0<\phi<\theta<1$.    For any $t=(\zeta_i, Z_i, C_i)_{i=1}^k\in T.X.\mathcal{C}$, any sequence $(x_i)_{i=1}^k\in \prod_{i=1}^k Z_i\cap C_i$, and any sequence $(a_i)_{i=1}^k$ of non-negative scalars, $$\varrho(\sum_{i=1}^k a_ix_i)\leqslant \frac{1}{\theta-\phi}\|\sum_{i=1}^k a_i e_{t|_i}\|_{\theta, \phi, \alpha,T}.$$ 
\label{claim2}
\end{claim}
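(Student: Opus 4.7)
The plan is to test the vector $\sum_i a_ie_{t|_i}$ against a geometrically-indexed family of functionals, one drawn from each $N_{\theta^n,\phi^n,T}\subset M_{\theta,\phi,\alpha,T}$, and then combine the resulting coordinate estimates via a layer-cake decomposition.  First, by $w^*$-compactness of $K$, fix $x^*\in K$ with $\varrho(\sum_{i=1}^k a_ix_i)=\text{Re\ }x^*(\sum_{i=1}^k a_ix_i)$.  For each $n\in \nn$ define
\[
J_n=\{i\leqslant k:\text{Re\ }x^*(x_i)\geqslant \phi^n\}.
\]
Since $(x_i)_{i\in J_n}\in \prod_{i\in J_n}Z_i\cap C_i$ and $x^*\in K$ together witness the defining condition of $\mathcal{H}^K_{\phi^n}$, the functional $f_n:=\theta^n\sum_{i\in J_n}e^*_{t|_i}$ lies in $N_{\theta^n,\phi^n,T}$, hence in $M_{\theta,\phi,\alpha,T}$ (take the trivial decomposition with a single summand and coefficient $1$).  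Writing $N:=\|\sum_j a_je_{t|_j}\|_{\theta,\phi,\alpha,T}$ and $A_n:=\sum_{i\in J_n}a_i$, evaluating $f_n$ at $\sum_j a_je_{t|_j}$ yields $A_n\leqslant \theta^{-n}N$ for every $n\geqslant 1$.

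Next, I split the coordinates using the layers $B_n:=J_n\setminus J_{n-1}$ for $n\geqslant 1$, with the convention $J_0=\varnothing$.  For $i\in B_n$ one has $\text{Re\ }x^*(x_i)\leqslant \phi^{n-1}$ (using $\phi^0=1$ and $x_i\in B_X$, $x^*\in B_{X^*}$ to handle the $n=1$ case), while indices $i\notin \bigcup_n J_n$ satisfy $\text{Re\ }x^*(x_i)\leqslant 0$ and therefore contribute nonpositively.  Using $a_i\geqslant 0$ gives
\[
\text{Re\ }x^*\Bigl(\sum_{i=1}^k a_ix_i\Bigr)\leqslant \sum_{n\geqslant 1}\phi^{n-1}(A_n-A_{n-1}).
\]
An Abel rearrangement, valid because the sum is effectively finite ($J_n$ stabilizes at $\{i:\text{Re\ }x^*(x_i)>0\}$), converts this into $(1-\phi)\sum_{n\geqslant 1}\phi^{n-1}A_n$.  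Substituting $A_n\leqslant \theta^{-n}N$ and summing the geometric series in $\phi/\theta<1$ produces
\[
(1-\phi)\sum_{n\geqslant 1}\phi^{n-1}\theta^{-n}N=\frac{(1-\phi)N}{\theta-\phi}\leqslant \frac{N}{\theta-\phi},
\]
which is the required estimate.

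I do not expect a genuine obstacle here: the $\alpha$-parameter and the disjoint-support structure built into $M_{\theta,\phi,\alpha,T}$ never come into play, and only the simplest elements of the sets $N_{\theta^n,\phi^n,T}$ are used.  The one delicate bookkeeping item is organizing the coordinates by the geometric scale $\phi^n$ and checking that the threshold $\phi^n$ used to witness membership in $\mathcal{H}^K_{\phi^n}$ is perfectly matched by the weight $\theta^n$ appearing in $N_{\theta^n,\phi^n,T}$---this matching is exactly what makes $\sum_n \phi^{n-1}\theta^{-n}$ sum to $\frac{1}{\theta-\phi}$ and yields the stated constant.
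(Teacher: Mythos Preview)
Your proof is correct and follows essentially the same approach as the paper's: fix a norming $x^*\in K$, stratify the indices by the geometric scale $\phi^n$ of $\text{Re\ }x^*(x_i)$, test $\sum a_ie_{t|_i}$ against the corresponding functionals in $N_{\theta^n,\phi^n,T}$, and sum the resulting geometric series in $\phi/\theta$. The only cosmetic difference is that the paper works directly with the layers $B_j=\{i:\text{Re\ }x^*(x_i)\in(\phi^j,\phi^{j-1}]\}$ rather than the cumulative sets $J_n$, thereby avoiding the Abel rearrangement; your route has the minor side benefit of yielding the slightly sharper constant $(1-\phi)/(\theta-\phi)$.
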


\begin{proof} We recall that if $C\in \mathcal{C}$, $C\subset B_X$ by the definition of $\mathcal{C}$. With $t$, $(x_i)_{i=1}^k\in \prod_{i=1}^k Z_i\cap C_i$, and $(a_i)_{i=1}^k$ as in the statement, fix $x^*\in K$ such that $\text{Re\ }x^*(\sum_{i=1}^k a_ix_i)=\varrho(\sum_{i=1}^k a_ix_i)$.  For all $j\in \nn$, let $B_j=\{i\leqslant k: \text{Re\ }x^*(x_i)\in (\phi^j, \phi^{j-1}]\}$. Note that for every $j\in \nn$, $\theta^j\sum_{i\in B_j} e^*_{t|_i}\in N_{\theta^j, \phi^j, T}$, so $$\phi^{j-1}\sum_{i\in B_j}a_i=\phi^{-1}(\phi/\theta)^j (\theta^j\sum_{i\in B_j} e^*_{t|_i})(\sum_{i=1}^k a_i e_{t|_i})\leqslant \phi^{-1}(\phi/\theta)^j \|\sum_{i=1}^k a_i e_{t|_i}\|_{\theta, \phi, \alpha,T}.$$    Then \begin{align*} \varrho(\sum_{i=1}^k a_ix_i) & \leqslant \sum_{j=1}^\infty \sum_{i\in B_j} a_i\text{Re\ }x^*(x_i)  \leqslant \sum_{j=1}^\infty \phi^{j-1}\sum_{i\in B_j} a_i \leqslant \sum_{j=1}^\infty \phi^{-1}(\phi/\theta)^j \|\sum_{i=1}^k a_ie_{t|_i}\|_{\theta, \phi, \alpha,T}\\ & =\frac{1}{\theta-\phi}\|\sum_{i=1}^k a_ie_{t|_i}\|_{\theta, \phi, \alpha,T}.\end{align*}

\end{proof}

\begin{corollary} Fix $1<p, \alpha, \beta<\infty$ with $1/\alpha+1/\beta=1$ and $\beta<p$.  Let $C'=C'(\beta, p)$ be the constant from Theorem \ref{LT}.  Suppose that $\xi$ is an ordinal, $n$ is a natural number, $\ee>0$, and $0<\phi<\theta<1$ are such that Player I has a winning strategy in the game with target set $$\Bigl\{t\in MAX(\Gamma_{\xi,n}.X.\mathcal{C}): \|\sum_{s\preceq t}\mathbb{P}_{\xi,n}(s) e_s\|_{\theta, \phi, \alpha, \Gamma_{\xi,n}}>\ee\Bigr\}.$$ Then for any $B$-tree $T$ with $o(T)\geqslant \omega^{1+\xi}n$ and any normally weakly null $(f_t)_{t\in T.L_p(X)}\subset B_{L_p(X)}$, $$\inf\bigl\{ \|\varrho(f)\|_{L_p}: t\in T.L_p(X), f\in \text{\emph{co}}(f_s: \varnothing\prec s\preceq t)\bigr\} \leqslant \frac{C' \ee}{n(\theta-\phi)}.$$  

\label{espoo}
\end{corollary}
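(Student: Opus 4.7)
The plan is to apply Proposition \ref{prop8} with Player I's winning strategy $\psi$ and then transfer the resulting combinatorial bound to an $L_p$ bound via the pointwise estimate of Claim \ref{claim2} and the $p$-concavity of the target seminorm.

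First, I fix $\delta > 0$ and apply Proposition \ref{prop8} to $\psi$, obtaining a maximal node $s = (\zeta_i, Z_i)_{i=1}^k \in MAX(\Gamma_{\xi,n}.X)$, a chain $\varnothing = t_0 \prec \cdots \prec t_k$ in $T.L_p(X)$, convex combinations $g_i \in \text{co}(f_u : t_{i-1} \prec u \preceq t_i)$, simple approximants $h_i$ with $\|g_i - h_i\|_{L_p(X)} < \delta$, and $C_i = \text{range}(\overline{h}_i) \subset B_{Z_i}$. Since $\psi$ wins against these responses, $(\zeta_i, Z_i, C_i)_{i=1}^k$ lies outside the target set, so
\[ \Big\|\sum_{s'\preceq s}\mathbb{P}_{\xi,n}(s')\, e_{s'}\Big\|_{\theta,\phi,\alpha,\Gamma_{\xi,n}} \leq \ee. \]

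Next, I take $f = \frac{1}{n}\sum_{i=1}^k \mathbb{P}_{\xi,n}(s|_i)\, g_i$; since $\sum_i \mathbb{P}_{\xi,n}(s|_i) = n$ (the decomposition $\Gamma_{\xi,n} = \cup_{i=1}^n \Lambda_{\xi,n,i}$ gives $n$ subsums each equal to $1$), $f$ lies in $\text{co}(f_u : \varnothing \prec u \preceq t_k)$. Because $\varrho$ is $1$-Lipschitz ($K \subset B_{X^*}$), $\|\varrho(f) - \varrho(\tilde f)\|_{L_p} < \delta$ for the surrogate $\tilde f = \frac{1}{n}\sum_i \mathbb{P}_{\xi,n}(s|_i)\, h_i$. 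Writing $h_i(\varpi) = \|h_i(\varpi)\|\,\overline{h}_i(\varpi)$ and applying Claim \ref{claim2} pointwise to $(\overline{h}_i(\varpi))_{i=1}^k \in \prod_i Z_i \cap C_i$ with coefficients $\frac{1}{n}\mathbb{P}_{\xi,n}(s|_i)\|h_i(\varpi)\|$ gives
\[ \varrho(\tilde f(\varpi)) \leq \frac{1}{\theta-\phi}\, \Big\|\frac{1}{n}\sum_{i=1}^k \mathbb{P}_{\xi,n}(s|_i)\,\|h_i(\varpi)\|\, e_{s|_i}\Big\|_{\theta,\phi,\alpha,\Gamma_{\xi,n}}. \]
Letting $E$ denote the span of $(e_{s|_i})_{i=1}^k$ under this seminorm, Claim \ref{claim1} makes this basis $1$-unconditional with a $1$-lower $\ell_\beta$ estimate, so Theorem \ref{LT} yields $M_{(p)}(E) \leq C'$. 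Taking $L_p$ norms in $\varpi$ and applying $p$-concavity with $f_i(\varpi) := \frac{1}{n}\mathbb{P}_{\xi,n}(s|_i)\,\|h_i(\varpi)\|$ moves the scalar $L_p$ norm inside the $E$-seminorm; combining $\|h_i\|_{L_p(X)} < 1+\delta$, $1$-unconditionality, and the game bound yields $\|\varrho(\tilde f)\|_{L_p} \leq \frac{C'(1+\delta)\,\ee}{n(\theta-\phi)}$. Adding back the Lipschitz error and sending $\delta \to 0^+$ gives the claimed infimum bound.

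The principal technical moment is the passage from the pointwise Claim \ref{claim2} estimate to an integral bound, which is exactly where $p$-concavity of $E$ and the assumption $\beta < p$ (entering through Theorem \ref{LT}) are essential; otherwise one cannot migrate the scalar $L_p$ norms of the $\|h_i(\varpi)\|$ inside the $\|\cdot\|_{\theta,\phi,\alpha,\Gamma_{\xi,n}}$ seminorm. The remainder is bookkeeping: verifying convex-combination membership, tracking the factor $1/n$ arising from $\sum_i \mathbb{P}_{\xi,n}(s|_i) = n$, and absorbing the approximation error $\delta$.
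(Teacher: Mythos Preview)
Your proof is correct and follows essentially the same approach as the paper: apply Proposition \ref{prop8} with the winning strategy, use Claim \ref{claim2} pointwise, then invoke the $1$-lower $\ell_\beta$ estimate of Claim \ref{claim1} together with Theorem \ref{LT} to pass to the $L_p$ norm via $p$-concavity, and finally absorb the $\delta$-error using the Lipschitz property of $\varrho$. The only cosmetic difference is that Proposition \ref{prop8} already gives $h_i\in B_{L_p(X)}$, so you may take $\|h_i\|_{L_p(X)}\leqslant 1$ rather than $<1+\delta$; also note that the basis vectors $e_s$ live in $c_{00}(\Gamma_{\xi,n}.X.\mathcal{C})$, so strictly speaking the branch should be indexed by $(\zeta_i,Z_i,C_i)_{i=1}^k$ rather than $s\in \Gamma_{\xi,n}.X$, but this identification is harmless and the paper itself is equally informal on this point.
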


\begin{proof} Recall for the proof that for a simple function $h\in L_p(X)$, $\overline{h}$ is the function in $L_p(X)$ such that $\overline{h}(\varpi)=0$ if $h(\varpi)=0$ and $\overline{h}(\varpi)=h(\varpi)/\|h(\varpi)\|$ otherwise.  

Fix a winning strategy $\psi$ for Player I in the game with the indicated target set. Fix $\delta>0$.   By Proposition \ref{prop8}, there exist $s=(\zeta_i , Z_i)_{i=1}^k\in MAX(\Gamma_{\xi,n}.X)$, $\varnothing=t_0\prec \ldots \prec t_k$, $g_i\in \text{co}(f_u:t_{i-1}\prec u\preceq t_i)$,  simple functions $h_i\in B_{L_p(X)}$, and $C_i\in \mathcal{C}$ such that $\|g_i-h_i\|_{L_p(X)}<\delta$, $\text{range}(\overline{h}_i)= C_i\subset B_{Z_i}$, and  $(\zeta_i, Z_i)= \psi((\zeta_j, Z_j, C_j)_{j=1}^{i-1})$.  This means that for any $\varpi\in [0,1]$, $(\overline{h}_i(\varpi))_{i=1}^k\in \prod_{i=1}^k Z_i\cap C_i$, whence by Claim \ref{claim2}, for any non-negative scalars $(a_i)_{i=1}^k$, $$\varrho(\sum_{i=1}^k a_ih_i(\varpi)) = \varrho(\sum_{i=1}^k a_i\|h_i(\varpi)\|\overline{h}_i(\varpi))\leqslant \frac{1}{\theta-\phi}\|\sum_{i=1}^k a_i \|h_i(\varpi)\| e_{s|_i}\|_{\theta, \phi, \alpha, \Gamma_{\xi,n}}.$$ Since by Claim \ref{claim2} $(e_u)_{u\preceq s}$ satisfies a lower $\ell_\beta$ estimate in its span,  we deduce that \begin{align*} \|\varrho(\sum_{i=1}^k n^{-1}\mathbb{P}_{\xi,n}(s|_i)h_i)\|_{L_p} & = \Bigl( \int_0^1 \Bigl|\varrho(\sum_{i=1}^k n^{-1}\mathbb{P}_{\xi,n}(s|_i)\|h_i(\varpi)\|\overline{h}_i(\varpi))\Bigr|^p d\varpi\Bigr)^{1/p}  \\ &  \leqslant \frac{1}{n(\theta-\phi)} \Bigl(\int_0^1 \Bigl\|\sum_{u\preceq s}  \mathbb{P}_{\xi,n}(u)\|h_{|u|}(\varpi)\| e_u\Bigr\|_{\theta, \phi, \alpha, \Gamma_{\xi,n}}^pd\varpi\Bigr)^{1/p} \\ & \leqslant \frac{C'}{n(\theta-\phi)} \Bigl\|\sum_{u\preceq s}\mathbb{P}_{\xi,n}(u) \|h_{|u|}\|_{L_p(X)}e_u\Bigr\|_{\theta, \phi, \alpha, \Gamma_{\xi,n}} \\ & \leqslant  \frac{C'\ee}{n(\theta-\phi)}.\end{align*}

Here we have used $1$-unconditionality, $\|h_i\|_{L_p(X)}\leqslant 1$ for each $1\leqslant i\leqslant k$, and the fact that since $\psi$ is a winning strategy for Player I, $$\|\sum_{u\preceq s}\mathbb{P}_{\xi,n}(u) \|h_{|u|}\|_{L_p(X)} e_u \|_{\theta, \phi, \alpha, \Gamma_{\xi,n}}\leqslant \|\sum_{u\preceq s} \mathbb{P}_{\xi,n}(u)e_u\|_{\theta, \phi, \alpha, \Gamma_{\xi,n}}\leqslant \ee.$$ Let $g=n^{-1}\sum_{u\preceq s}\mathbb{P}_{\xi,n}(u) g_i\in \text{co}(f_u:u\preceq t_k)$ and $h=n^{-1}\sum_{u\preceq s} \mathbb{P}_{\xi,n}(u) h_i$.   Since $\varrho$ is $1$-Lipschitz, it follows that $\|\varrho(g)-\varrho(g)\|_{L_p}\leqslant \|g-h\|_{L_p(X)}<\delta$, so that $$\|\varrho(g)\|_{L_p}\leqslant \delta+\|\varrho(h)\|_{L_p}\leqslant \delta+\frac{C'\ee}{n(\theta-\phi)}.$$ Since $\delta>0$ was arbitrary, we are done.

\end{proof}

\subsection{Particular games on $\Gamma_{\xi,n}.X.\mathcal{C}$}

The statement of Proposition \ref{technical} is notationally cumbersome.  We isolate the following result as a way of using Proposition \ref{technical}.  

\begin{lemma} Fix  $0<\phi<\theta <1$.  Suppose  that $\xi$ is an ordinal, $m,n$ are natural numbers, $(C_s)_{s\in \Gamma_{\xi,n}.X}\subset \mathcal{C}$, and $(\sigma, \tau):\Gamma_{\xi,m}.X\to \Gamma_{\xi,n}.X$ is an extended pruning. For $t=(\zeta_i, Z_i)_{i=1}^k\in \Gamma_{\xi,n}.X$, let $r(t)=(\zeta_i, Z_i, C_{t|_i})_{i=1}^k$.  If $\nu\in \nn$ is such that for every $s\in MAX(\Gamma_{\xi,m}.X)$, there exists a functional $h_s\in \cup_{l=1}^\nu N_{\theta^i, \phi^i, \Gamma_{\xi,n}}$ such that $\cup_{t\preceq s} r(\sigma(t))\subset \text{\emph{supp}}(h_s)$, then $Sz(K, \phi^\nu/2)>\omega^\xi m$.   
\label{criteria}
\end{lemma}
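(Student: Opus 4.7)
The plan is to reduce the lemma to Proposition \ref{technical} by unpacking the definition of $N_{\theta^l,\phi^l,\Gamma_{\xi,n}}$ and showing that the hypotheses on the $h_s$ translate directly into the ``$\mathcal{H}_\varepsilon^K$'' condition needed there. So I would set $S=\Gamma_{\xi,m}$, $T=\Gamma_{\xi,n}$, $\theta=\sigma$, and use the family $(C_s)_{s\in\Gamma_{\xi,n}.X}$ supplied by the hypothesis, then verify that $\mathfrak{s}(s)=\prod_{i=1}^{|s|}\lambda(\sigma(s|_i))\in\mathcal{H}_{\phi^\nu}^K$ for every $s\in\Gamma_{\xi,m}.X$.

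To verify this, I would first fix $s\in MAX(\Gamma_{\xi,m}.X)$ and pick an $l=l(s)\in\{1,\dots,\nu\}$ with $h_s\in N_{\theta^l,\phi^l,\Gamma_{\xi,n}}$. By definition of $N$, $h_s$ has the form $\theta^l\sum_{i=1}^{k_s}e^*_{u_s|_{j_i^s}}$ for some $u_s=(\zeta^u_j,Z^u_j,C^u_j)_{j=1}^{|u_s|}\in\Gamma_{\xi,n}.X.\mathcal{C}$ and indices $j_1^s<\cdots<j_{k_s}^s$, with $\prod_{i=1}^{k_s}Z^u_{j_i^s}\cap C^u_{j_i^s}\in\mathcal{H}_{\phi^l}^K$. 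The containment $\bigcup_{t\preceq s}r(\sigma(t))\subset\operatorname{supp}(h_s)$ means that for every $t\preceq s$ there is an index $i_t$ with $r(\sigma(t))=u_s|_{j_{i_t}^s}$. Comparing final coordinates, the factor $Z^u_{j_{i_t}^s}\cap C^u_{j_{i_t}^s}$ equals $\lambda(\sigma(t))$. Thus $\prod_{t\preceq s}\lambda(\sigma(t))$ is a subproduct of $\prod_{i=1}^{k_s}Z^u_{j_i^s}\cap C^u_{j_i^s}$, and since the defining witnesses $(x_i)_{i=1}^{k_s}$ and $x^*\in K$ still verify the ``$\geqslant\phi^l$'' inequality on any subproduct, we conclude $\prod_{t\preceq s}\lambda(\sigma(t))\in\mathcal{H}_{\phi^l}^K\subset\mathcal{H}_{\phi^\nu}^K$, using $l\leqslant\nu$ and $\phi<1$. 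For an arbitrary (not necessarily maximal) $s\in\Gamma_{\xi,m}.X$, I would just extend $s$ to any $s'\in MAX(\Gamma_{\xi,m}.X)$; then $\mathfrak{s}(s)$ is a subproduct of $\mathfrak{s}(s')$ and the same witnesses work.

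With the hypothesis of Proposition \ref{technical} verified for $\varepsilon=\phi^\nu$, I would apply it with $\delta=\phi^\nu/2<\phi^\nu$ to conclude $Sz(K,\phi^\nu/2)>o(\Gamma_{\xi,m})=\omega^\xi m$, which is the claim.

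The only real step that needs care is matching the notational conventions: checking that the last-coordinate factor $Z^u_{j_{i_t}^s}\cap C^u_{j_{i_t}^s}$ of the product appearing in the $\mathcal{H}_{\phi^l}^K$ condition for $h_s$ actually coincides with $\lambda(\sigma(t))=Z_{|\sigma(t)|}^\sigma\cap C_{\sigma(t)}$. This is essentially a bookkeeping verification using the fact that $r(\sigma(t))=u_s|_{j_{i_t}^s}$ forces agreement of all three coordinates at position $|\sigma(t)|=j_{i_t}^s$; the rest of the argument is a straightforward invocation of Proposition \ref{technical} together with the monotonicity $\mathcal{H}_{\phi^l}^K\subset\mathcal{H}_{\phi^\nu}^K$ and the sub-product stability of $\mathcal{H}_\varepsilon^K$.
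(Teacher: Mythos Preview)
Your proposal is correct and follows essentially the same approach as the paper: both reduce to Proposition~\ref{technical} by showing $\mathfrak{s}(s)\in\mathcal{H}^K_{\phi^\nu}$ for every $s\in\Gamma_{\xi,m}.X$, first for maximal $s$ by unpacking the form of $h_s\in N_{\theta^l,\phi^l,\Gamma_{\xi,n}}$ and identifying $\lambda(\sigma(t))$ with the appropriate factor of the $\mathcal{H}^K_{\phi^l}$-witness product, and then for general $s$ by sub-product stability. The only cosmetic slip is the line ``$\theta=\sigma$,'' where you mean that the pruning in Proposition~\ref{technical} is taken to be $\sigma$ (the symbol $\theta$ is already in use as a real parameter).
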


\begin{proof} For $s=(\zeta_i, Z_i)_{i=1}^k\in \Gamma_{\xi,n}.X$, let $\lambda(s)=Z_k\cap C_s$. For $s\in \Gamma_{\xi,m}.X$, let $\mathfrak{s}(s)=\prod_{i=1}^{|s|} \lambda(\sigma(s|_i))$.   

Fix $s\in MAX(\Gamma_{\xi,m}.X)$ and let $h_s\in \cup_{l=1}^\nu N_{\theta^l, \phi^l, \Gamma_{\xi,n}}$ be as in the statement of the lemma and fix $1\leqslant l\leqslant \nu$ such that $h_s\in N_{\theta^l, \phi^l, \Gamma_{\xi,n}}$.   We will prove that $\mathfrak{s}(s)\in \mathcal{H}^K_{\phi^\nu}$.   Since for any $1\leqslant m\leqslant k$ and any $C_1', \ldots, C_k'\in  \mathcal{C}$ such that  $\prod_{i=1}^k C_i'\in \mathcal{H}^K_{\phi^\nu}$,   $\prod_{i=1}^m C_i' \in \mathcal{H}^K_{\phi^\nu}$, this will show that for any non-empty initial segment $s_1$ of $s$, $\mathfrak{s}(s_1)\in \mathcal{H}^K_{\phi^\nu}$.  From here, an appeal to Proposition \ref{technical} will finish the proof.   

Fix $u=(\mu_i, W_i, C_i)_{i=1}^{|u|}\in \Gamma_{\xi,n}.X.\mathcal{C}$ and $1\leqslant j_1<\ldots <j_\mu\leqslant |u|$ such that $h_s=\theta^l\sum_{i=1}^\mu e^*_{u|_{j_i}}$ and $\prod_{i=1}^\mu W_{j_i}\cap C_{j_i}\in \mathcal{H}^K_{\phi^l}$.   Let $\tau(s)=t=(\zeta_i, Z_i)_{i=1}^\eta$.  For each $1\leqslant i\leqslant |s|$, let $l_i=|\sigma(s|_i)|$.  Note that for all $1\leqslant i\leqslant |s|$, $r(\sigma(s|_i))=(\zeta_j, Z_j, C_{t|_j})_{j=1}^{l_i}$ and $\mathfrak{s}(s)=\prod_{j=1}^{|s|} Z_{l_j}\cap C_{t|_{l_j}}.$   By hypothesis, \begin{align*} \{(\zeta_j, Z_j, C_{t|_j})_{j=1}^{l_i}: 1\leqslant i\leqslant |s|\} & = \{r(\sigma(s|_i)):1\leqslant i\leqslant |s|\} \\ & \subset \text{supp}(h_s) = \{(\mu_j, W_j, C_j)_{j=1}^{j_i}: 1\leqslant i\leqslant \mu\}.\end{align*} From this it follows that there exist $m_1<\ldots <m_{|s|}$ such that for every $1\leqslant i\leqslant |s|$, $r(\sigma(s|_i))=u|_{j_{m_i}}$.  Choose $(x_i)_{i=1}^\mu\in \prod_{i=1}^\mu W_{j_i}\cap C_{j_i}$ such that there exists $x^*\in K$ so that $\text{Re\ }x^*(x_i)\geqslant \phi^l$ for each $1\leqslant i\leqslant \mu$, which exists because $\prod_{i=1}^\mu W_{j_i}\cap C_{j_i}\in \mathcal{H}^K_{\phi^l}$. Since $Z_{l_i}=W_{j_{m_i}}$ and $C_{t|_{l_i}}=C_{j_{m_i}}$, $(x_{m_i})_{i=1}^{|s|}\in \prod_{i=1}^{|s|} Z_{l_i}\cap C_{t|_{l_i}},$ which shows that    $\mathfrak{s}(s)\in \mathcal{H}^K_{\phi^l}. $  Since $l\leqslant \nu$, $\mathcal{H}^K_{\phi^l}\subset \mathcal{H}^K_{\phi^\nu}$, so that $\mathfrak{s}(s)\in \mathcal{H}^K_{\phi^\nu}$.

\end{proof}

\begin{lemma} Fix $1<\alpha<\infty$ and $0<\phi<\theta<1$.  If $Sz(K)\leqslant \omega^\xi$, then for any $\ee>0$, Player I has a winning strategy in the game with target set $$\Bigl\{t\in MAX(\Gamma_\xi.X.\mathcal{C}): \|\sum_{s\preceq t}\mathbb{P}_\xi(s) e_s\|_{\theta, \phi, \alpha, \Gamma_\xi} > \ee\Bigr\}.$$

\label{game lemma}
\end{lemma}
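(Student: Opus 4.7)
\medskip
\noindent\textbf{Plan of proof.} The plan is to argue by contradiction and to apply Lemma \ref{criteria} to contradict the hypothesis $Sz(K)\leqslant \omega^\xi$. Fix $\ee>0$ and suppose Player I has no winning strategy in the game with the stated target set. By the determinacy result (the unlabelled proposition preceding Proposition \ref{games}), Player II has a winning strategy, and Proposition \ref{games} then produces a family $(C_s)_{s\in\Gamma_\xi.X}\subset \mathcal{C}$ such that, writing $r(s):=(\zeta_j, Z_j, C_{s|_j})_{j=1}^{|s|}$ for $s=(\zeta_j, Z_j)_{j=1}^{|s|}\in\Gamma_\xi.X$ and $v_t:=\sum_{s\preceq t}\mathbb{P}_\xi(s) e_{r(s)}$, one has $\|v_t\|_{\theta,\phi,\alpha,\Gamma_\xi}>\ee$ for every $t\in MAX(\Gamma_\xi.X)$.

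The next step is to extract from each norming witness a single ``good'' functional whose level is uniformly bounded. For each $t$ pick $f_t=\sum_i a_{t,i} g_{t,i}\in M_{\theta,\phi,\alpha,\Gamma_\xi}$ with disjointly supported $g_{t,i}\in N_{\theta^{n_{t,i}},\phi^{n_{t,i}},\Gamma_\xi}$, $\sum_i a_{t,i}^\alpha\leqslant 1$, and $f_t(v_t)>\ee$. H\"older's inequality with exponents $\alpha,\beta$ gives $\sum_i g_{t,i}(v_t)^\beta>\ee^\beta$. Writing $g_{t,i}(v_t)=\theta^{n_{t,i}}\mu_{t,i}$, where $\mu_{t,i}$ is the $\mathbb{P}_\xi$-mass of $\text{supp}(g_{t,i})$ along $t$, disjointness of supports and $\sum_{s\preceq t}\mathbb{P}_\xi(s)=1$ yield $\sum_{n_{t,i}=n} g_{t,i}(v_t)^\beta\leqslant\theta^{n\beta}$, so the geometric tail $\sum_{n>\nu}\theta^{n\beta}$ can be made smaller than $\ee^\beta/2$ for a suitable $\nu=\nu(\theta,\beta,\ee)$. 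A pigeonhole on $n\in\{1,\ldots,\nu\}$ and a power-mean inequality then furnish, for each $t$, an index $i^*(t)$ with $n_{t,i^*(t)}\leqslant \nu$ and $\mu_{t,i^*(t)}\geqslant c$ for a fixed constant $c=c(\ee,\theta,\phi,\alpha)>0$.

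It remains to assemble these branch-by-branch choices into an extended pruning $(\sigma,\tau):\Gamma_\xi.X\to\Gamma_\xi.X$ to which Lemma \ref{criteria} (with $m=n=1$) can be applied; this would yield $Sz(K,\phi^{n^*}/2)>\omega^\xi$ for some $n^*\leqslant \nu$, contradicting $Sz(K)\leqslant\omega^\xi$. Theorem \ref{stabilization}(iii) applied to the finite-valued function $t\mapsto n_{t,i^*(t)}$ first stabilizes it to a constant $n^*\leqslant\nu$ on a level-preserving pruned subtree. A second stabilization of a suitable finitary invariant of $\text{supp}(g_{t,i^*(t)})$ relative to $t$ (combined with iterated applications of parts (ii)--(iv) of Theorem \ref{stabilization} and the recursive structure $\Gamma_{\xi+1}=\sqcup_n\Gamma_{\xi,n}$, together with the associated distribution of $\mathbb{P}_\xi$-mass) must then refine the pruning so that the chosen functional $h_s:=g_{\tau(s),i^*(\tau(s))}\in N_{\theta^{n^*},\phi^{n^*},\Gamma_\xi}$ literally contains $\bigcup_{t\preceq s} r(\sigma(t))$ in its support. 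This coverage step is the principal obstacle: the earlier pigeonhole only supplies a functional with large $\mathbb{P}_\xi$-mass, and upgrading ``large mass'' to the set-theoretic coverage required by Lemma \ref{criteria} is where the bulk of the technical work will lie.
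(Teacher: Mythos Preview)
Your overall strategy matches the paper: argue by contradiction, apply determinacy and Proposition~\ref{games} to obtain $(C_s)_{s\in\Gamma_\xi.X}$, and then invoke Lemma~\ref{criteria} (with $m=n=1$) to force $Sz(K,\phi^\nu/2)>\omega^\xi$. The gap is exactly where you locate it: your mass pigeonhole gives, for each maximal $t$, a single $g_{t,i^*(t)}$ with $\mathbb{P}_\xi$-mass at least $c$ along $t$ and level at most $\nu$, but Lemma~\ref{criteria} requires the support of the chosen functional to \emph{contain} every $r(\sigma(u))$, $u\preceq s$. You offer no mechanism for turning ``large mass along each branch'' into full coverage after pruning, and the hint toward iterated stabilizations and the recursive structure of $\Gamma_{\xi+1}$ does not point to any concrete argument.

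The paper closes this gap by stabilizing a different quantity first. Set $F(s,t)=f_t(e_{r(s)})$; since $\sum_{s\preceq t}\mathbb{P}_\xi(s)F(s,t)=f_t(v_t)>\ee$ for every maximal $t$, Theorem~\ref{stabilization}(i) with $n=1$ yields an extended pruning $(\sigma,\tau)$ after which $f_{\tau(t)}(e_{r(\sigma(s))})>\ee$ for \emph{every} $(s,t)\in\Pi(\Gamma_\xi.X)$. This pointwise lower bound immediately forces each $r(\sigma(s))$ into the support of some $g_{i,t}$ with $a_{i,t}>\ee$ (since $f_{\tau(t)}(e_u)\leqslant a_{i,t}\|g_{i,t}\|_\infty\leqslant a_{i,t}$ for $u\in\text{supp}(g_{i,t})$), and there are at most $\lfloor\ee^{-\alpha}\rfloor$ such indices. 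One further application of Theorem~\ref{stabilization}(ii) to the finite-valued map $(s,t)\mapsto i$ then stabilizes to a single index $l$, so that $h_{l,\tau'(s)}$ covers every $r(\sigma\circ\sigma'(u))$, $u\preceq s$; the level bound $\leqslant\nu$ follows from $\ee\leqslant\|h_{l,\tau'(s)}\|_\infty\leqslant\theta^i$ once $\nu$ is chosen with $\theta^\nu<\ee$. No mass-to-coverage upgrade is needed.
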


\begin{proof} Suppose not. Then by Proposition \ref{games}, there exist $\ee>0$ and  $(C_s)_{s\in \Gamma_\xi.X}\subset \mathcal{C}$ such that  $$\ee< \inf\Bigl\{\|\sum_{s\preceq t} \mathbb{P}_\xi(s) e_{r(s)}\|_{\theta, \phi, \alpha, \Gamma_\xi}: t\in MAX(\Gamma_\xi.X)\Bigr\}.$$ For $s=(\zeta_i, Z_i)_{i=1}^k\in \Gamma_\xi.X$, let $r(s)=(\zeta_i, Z_i, C_{s|_i})_{i=1}^k$.   For every $t\in MAX(\Gamma_\xi.X)$, fix $f_t\in M_{\theta, \phi, \alpha, \Gamma_\xi}$ such that $\supp(f_t)\subset [\preceq r(t)]$ and  $f_t(\sum_{s\preceq t}\mathbb{P}_\xi(s) e_{r(s)})=\|\sum_{s\preceq t} \mathbb{P}_\xi(s)e_{r(s)}\|_{\theta, \phi,\alpha, \Gamma_\xi}$.  Define $F:\Pi(\Gamma_\xi.X)\to \rr$ by letting $F(s,t)=f_t(e_{r(s)})$.   By Theorem \ref{stabilization}, there exists an extended pruning $(\sigma, \tau):\Gamma_\xi.X\to \Gamma_\xi.X$ such that $$\ee<\inf_{(s,t)\in \Pi(\Gamma_\xi.X)} F(\sigma(s), \tau(t)).$$   Fix $\nu\in \nn$ such that $\ee>\theta^\nu$ and for each $t\in MAX(\Gamma_\xi.X)$, write $f_{\tau(t)}=\sum_{i=1}^{k_t} a_{i,t}g_{i,t}$ where $a_{i,t}\geqslant 0$, $\sum_{i=1}^{k_t} a_{i,t}^\alpha\leqslant 1$, and $g_{i,t}\in \cup_{n=1}^\infty N_{\theta^n, \phi^n, \Gamma_\xi}$ have pairwise disjoint supports.    For each $t\in MAX(\Gamma_\xi.X)$, let $$R_t=\{i\leqslant k_t: a_{i,t}\geqslant \ee\}.$$ Since $\sum_{i=1}^{k_t} a_{i,t}^\alpha\leqslant 1$, $|R_t|\leqslant \lfloor 1/\ee^\alpha\rfloor =:k_0$.   Note that since $\ee<f_{\tau(t)}(e_{r(\sigma(s))})$ for any $\varnothing \prec s\preceq t$, $r(\sigma(s))\in \cup_{i\in R_t} \supp(g_{i,t})$.   We write $\sum_{i\in R_t} a_{i,t}g_{i,t}=\sum_{i=1}^{l_t} b_{i,t} h_{i,t}$ where $l_t\leqslant k_0$, $(b_{i,t})_{i=1}^{l_t}$ is an enumeration of $(a_{i,t})_{i\in R_t}$, and $(h_{i,t})_{i=1}^{l_t}$ is the corresponding enumeration of $(g_{i,t})_{i\in R_t}$.    Define $\kappa:\Pi(\Gamma_\xi.X)\to \{1, \ldots, k_0\}$ by letting $\kappa(\sigma, \tau)$ be the unique $i\leqslant l_t$ such that $r(\sigma(s))\in \text{supp}(h_{i,t})$.   By Theorem \ref{stabilization}$(ii)$, there exists an extended pruning $(\sigma',\tau'):\Gamma_\xi.X\to \Gamma_\xi.X$ and $1\leqslant l\leqslant k_0$ such that $\kappa(\sigma'(s), \tau'(t))=l$ for all $(s, t)\in \Pi(\Gamma_\xi.X)$.    We now note that for any $s\in MAX(\Gamma_\xi.X)$, $h_{l, \tau'(s)}\in \cup_{i=1}^\nu N_{\theta^i, \phi^i, \Gamma_\xi}$ is such that $$\{r(\sigma\circ \sigma'(u)): \varnothing\prec u\preceq s\}\subset \text{supp}(h_{l, \tau'(s)}),$$ and an appeal to Lemma \ref{criteria} yields that $Sz(K, \phi^\nu/2)>\omega^\xi$.  This contradiction finishes the proof.  To see that $h_{l, \tau'(s)}\in \cup_{i=1}^\nu N_{\theta^i, \phi^i, \Gamma_\xi}$, we note that if $h_{l, \tau'(s)}\in N_{\theta^i, \phi^i, \Gamma_\xi}$, $$\ee\leqslant h_{l, \tau'(t)}(e_{r(\sigma\circ \sigma'(s))}) \leqslant \|h_{l, \tau'(t)}\|_\infty \leqslant \theta^i.$$   This shows that $i\leqslant \nu$ by our choice of $\nu$.

\end{proof}

\begin{lemma} Fix  $1<\alpha, \beta<\infty$ and $0<\phi<2^{-1/\alpha}$ and assume that $1/\alpha+1/\beta=1$.  Assume that for some $C\geqslant 1$ and all $i\in \nn$, $Sz_\xi(K, \phi^i/2)\leqslant C 2^i$.  Let $\theta=2^{-1/\alpha}$.   Then for any $n\in \nn$ and any $C_1>C$, Player I has a winning strategy in the game with target set $$\Bigl\{t\in MAX(\Gamma_{\xi,n}.X.\mathcal{C}): \|\sum_{s\preceq t}\mathbb{P}_{\xi,n}(s) e_s\|_{\theta, \phi, \alpha, \Gamma_{\xi,n}}> C_1n^{1/\beta}\Bigr\}.$$

\label{game lemma 2}
\end{lemma}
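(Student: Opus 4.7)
I plan to argue by contradiction along the lines of Lemma \ref{game lemma}, replacing the crude pigeonhole with a H\"older estimate calibrated to $\theta=2^{-1/\alpha}$. If Player I has no winning strategy, Proposition \ref{games} produces $(C_s)_{s\in\Gamma_{\xi,n}.X}\subset\mathcal{C}$ so that, writing $r(s)=(\zeta_i,Z_i,C_{s|_i})_{i=1}^{|s|}$, we have $\|\sum_{s\preceq t}\mathbb{P}_{\xi,n}(s)e_{r(s)}\|_{\theta,\phi,\alpha,\Gamma_{\xi,n}}>C_1 n^{1/\beta}$ for every $t\in MAX(\Gamma_{\xi,n}.X)$. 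For each such $t$ I fix a witness $f_t=\sum_j a_{j,t}g_{j,t}$ with disjointly supported $g_{j,t}\in N_{\theta^{\ell_j(t)},\phi^{\ell_j(t)},\Gamma_{\xi,n}}$ and $\sum_j a_{j,t}^\alpha\leqslant 1$, reordered so that $a_{1,t}\geqslant a_{2,t}\geqslant\cdots$; this ordering guarantees $a_{j,t}\geqslant c\Rightarrow j\leqslant 1/c^\alpha$. Applying Theorem \ref{stabilization}(i) to $F(s,t)=f_t(e_{r(s)})$ with $\lambda$ just below $C_1n^{1/\beta}$ yields a level preserving extended pruning $(\sigma_1,\tau_1)$ and reals $b_1,\ldots,b_n$ with $\sum_i b_i>\lambda$ and $f_{\tau_1(t)}(e_{r(\sigma_1(s))})\geqslant b_i$ whenever $\Lambda_{\xi,n,i}\ni s\preceq t\in MAX$.

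Next, I define $\kappa(s,t)$ to be the rank of the unique block whose support contains $r(\sigma_1(s))$, or $\infty$ if no such block exists. Because $a_{\kappa(s,t),\tau_1(t)}\theta^{\ell_{\kappa(s,t)}(\tau_1(t))}\geqslant b_i$ and $\ell_j(t)\geqslant 1$, the decreasing-coefficient ordering forces $\kappa(s,t)\leqslant\theta^\alpha/b_i^\alpha$ on any level $i$ with $b_i>0$; so $\kappa$ is per-level bounded. Regarding $\kappa$ as valued in the one-point compactification $\nn\cup\{\infty\}$, Theorem \ref{stabilization}(ii) with small enough $\delta$ produces a further level preserving pruning and integer constants $x_i$ (with $x_i=\infty$ forcing $b_i=0$) such that $\kappa(\sigma(s),\tau(t))=x_i$ for every $\Lambda_{\xi,n,i}\ni s\preceq t\in MAX$. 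Setting $I(l)=\{i: x_i=l\}$, the $l$-th block of $f_{\tau(t)}$ then contains $r(\sigma(s))$ for every $s\in\Lambda_{\xi,n,i}$ with $i\in I(l)$. Summing $b_i\leqslant a_{l,\tau(t)}\theta^{\ell_l(\tau(t))}$ and applying H\"older with $\sum_l a_{l,\tau(t)}^\alpha\leqslant 1$, $\sum_l|I(l)|\leqslant n$, and $\theta^\alpha=1/2$ produces
\[
C_1^\alpha<\sum_l|I(l)|\,a_{l,\tau(t)}^\alpha\,2^{-\ell_l(\tau(t))}
\]
for each $t$ in the pruned $MAX$. If $|I(l)|\leqslant C\cdot 2^{\ell_l(\tau(t))}$ held for every $l$, the right-hand side would be at most $C\sum_l a_{l,\tau(t)}^\alpha\leqslant C<C_1^\alpha$ (using $C_1>C\geqslant 1$ and $\alpha>1$), a contradiction; so for each $t$ there exists $l^\ast(t)\in\text{range}(x)\cap\nn$ with $|I(l^\ast(t))|>C\cdot 2^{\ell_{l^\ast(t)}(\tau(t))}$.

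Because $\text{range}(x)\cap\nn$ is finite, Theorem \ref{stabilization}(iii) lets me assume $l^\ast(t)\equiv l^\ast$ on the pruned $MAX$. Then $\ell_{l^\ast}(\tau(t))$ is a non-negative integer bounded by $\log_2(|I(l^\ast)|/C)\leqslant\log_2(n/C)$, so a second application of Theorem \ref{stabilization}(iii) makes it equal to a constant $y^\ast$, preserving $|I(l^\ast)|>C\cdot 2^{y^\ast}$. Theorem \ref{stabilization}(iv) then provides an extended pruning from $\Gamma_{\xi,|I(l^\ast)|}.X$ onto the levels in $I(l^\ast)$; writing $(\sigma_\ast,\tau_\ast):\Gamma_{\xi,|I(l^\ast)|}.X\to\Gamma_{\xi,n}.X$ for the total composed pruning and taking $h_s:=g_{l^\ast,\tau_\ast(s)}\in N_{\theta^{y^\ast},\phi^{y^\ast},\Gamma_{\xi,n}}$, we have $\bigcup_{u\preceq s}r(\sigma_\ast(u))\subseteq\text{supp}(h_s)$ for every $s\in MAX$. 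Lemma \ref{criteria} yields $Sz(K,\phi^{y^\ast}/2)>\omega^\xi|I(l^\ast)|>\omega^\xi\cdot C\cdot 2^{y^\ast}$, contradicting the hypothesis $Sz_\xi(K,\phi^{y^\ast}/2)\leqslant C\cdot 2^{y^\ast}$. The main technical point is the bookkeeping of $\kappa$, which is a priori $\nn$-valued without a uniform bound: the decreasing-coefficient reindexing enforces a per-level bound, making Theorem \ref{stabilization}(ii) applicable through the one-point compactification, after which the H\"older step is what upgrades Lemma \ref{game lemma}'s rough pigeonhole to the finer $C\cdot 2^i$ count demanded by the power-type hypothesis.
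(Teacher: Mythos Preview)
Your argument is correct and shares the paper's high-level strategy (contradict via Theorem~\ref{stabilization}(i), stabilize, use H\"older calibrated to $\theta^\alpha=1/2$, and invoke Lemma~\ref{criteria} to violate $Sz_\xi(K,\phi^\nu/2)\leqslant C\,2^\nu$), but the order and mechanics of the stabilizations differ enough to be worth noting. The paper first introduces an auxiliary $\delta>0$, discards the small blocks of each $f_{\tau(t)}$, and in a sublemma stabilizes \emph{everything} about the remaining blocks --- their number $l$, their coefficients $(a_i)_{i=1}^l\in B_{\ell_\alpha^l}$, their exponents $(w_i)_{i=1}^l$, and the assignment $\kappa$ --- so that these data become independent of $t$; only then does it run H\"older, proving along the way that each $|R_j|\leqslant C\,2^{w_j}$ via Lemma~\ref{criteria}. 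You instead reorder each decomposition by decreasing coefficient to get a per-level bound on $\kappa$, stabilize just $\kappa$ (via the one-point compactification of $\nn$ in Theorem~\ref{stabilization}(ii)), run H\"older with the coefficients and exponents still $t$-dependent to locate a single bad index $l^\ast(t)$, and only afterwards stabilize $l^\ast$ and the exponent $\ell_{l^\ast}$ before applying Lemma~\ref{criteria}.

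What each buys: the paper's upfront stabilization makes the final H\"older computation entirely $t$-free, at the cost of a longer sublemma and the $\delta$-truncation device. Your route is more economical in the number of quantities stabilized (you never need the coefficients $a_l$ to be $t$-independent), but it leans on the compactification trick, which requires a little care: on levels with $b_i\leqslant 0$ the map $\kappa$ need not be constant after pruning, so your sets $I(l)$ should be read as $\{i: b_i>0,\ x_i=l\}$ (equivalently, redefine $\kappa=\infty$ on those levels), and the single $\delta$ in Theorem~\ref{stabilization}(ii) must be chosen small relative to the \emph{finitely many} per-level bounds $\lfloor\theta^\alpha/b_i^\alpha\rfloor$. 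With that reading, your containment $\bigcup_{u\preceq s} r(\sigma_\ast(u))\subset\text{supp}(h_s)$ and the final appeal to Lemma~\ref{criteria} go through exactly as you describe.
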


\begin{proof} Suppose not.  Then for some $n\in \nn$, there exist $(C_s)_{s\in \Gamma_{\xi,n}.X}\subset \mathcal{C}$ and $$(f_t)_{t\in MAX(\Gamma_{\xi,n}.X)}\subset M_{\theta, \phi, \alpha, \Gamma_{\xi,n}}$$ such that $$C n^{1/\beta} < \inf_{t\in MAX(\Gamma_{\xi,n}.X)} f_t\bigl(\sum_{s\preceq t}\mathbb{P}_{\xi,n}(s) e_{r(s)}\bigr).$$  We may assume as in Lemma \ref{game lemma} that $\supp(f_t)\subset [\preceq r(t)]$ for each $t\in MAX(\Gamma_{\xi,n}.X)$.    Then by Theorem \ref{stabilization}$(i)$, there exist a level preserving extended pruning $(\sigma, \tau):\Gamma_{\xi,n}\to \Gamma_{\xi,n}$ and numbers, $b_1, \ldots, b_n$ such that $C  n^{1/\beta}<\sum_{i=1}^n b_i$ and for all $1\leqslant i\leqslant n$ and all $\Lambda_{\xi,n,i}\ni s\preceq t\in MAX(\Gamma_{\xi,n})$, $f_{\tau(t)}(e_{r(\sigma(s))})\geqslant b_i$.   Fix $\delta>0$ such that $Cn^{1/\beta}+n\delta< \sum_{i=1}^n b_i$.  Let $R=\{i\leqslant n: b_i\geqslant \delta\}$.   

\begin{sublemma} There exist a level preserving extended pruning $(\sigma_0, \tau_0):\Gamma_{\xi,n}.X\to \Gamma_{\xi,n}.X$, $l,w\in \nn$, $(a_i)_{i=1}^l\in B_{\ell_\alpha^l}$, $(k_i)_{i\in R}\subset \{1, \ldots, l\}$, $(w_i)_{i=1}^l\subset \{1, \ldots, w\}$, and $(g_t)_{t\in MAX(\Gamma_{\xi,n}.X)}\subset M_{\theta,\phi, \alpha, \Gamma_{\xi,n}}$ such that \begin{enumerate}[(i)]\item for each $t\in MAX(\Gamma_{\xi,n}.X)$, $\|g_t-f_{\tau\circ \tau_0(t)}\|_\infty<\delta$, \item for any $t\in MAX(\Gamma_{\xi,n}.X)$, there exist disjointly supported functionals $h_{1,t}, \ldots, h_{l,t}$ such that $h_{i,t}\in N_{\theta^{w_i}, \phi^{w_i}, \Gamma_{\xi,n}}$ and $g_t=\sum_{i=1}^l a_ih_{i,t}$, \item for $i\in R$ and $\Lambda_{\xi,n,i}\ni s\preceq t\in MAX(\Gamma_{\xi,n}.X)$, $r(\sigma\circ \sigma_0(s))\in \text{\emph{supp}}(h_{k_i, t})$,  \end{enumerate}

\end{sublemma}

We first finish the proof of the lemma and then return to the proof of the sublemma.  Note that item $(iii)$ of the sublemma implies that for $i\in R$ and $\Lambda_{\xi,n,i}\ni s\preceq t\in MAX(\Gamma_{\xi,n})$, $$b_i\leqslant g_t(e_{r(\sigma\circ \sigma_0(s))})+\delta= a_{k_i}\theta^{w_{k_i}}+\delta.$$  From this and our choice of $\delta$ we deduce that $$Cn^{1/\beta}+\delta n < \sum_{i=1}^n b_i \leqslant \delta n +\sum_{i\in R} a_{k_i}\theta^{w_{k_i}}.$$  Partition $R$ into sets $R_1, \ldots, R_l$, where $R_j=\{ i\in R: k_i=j\}$, so that $$C n^{1/\beta} < \sum_{i\in R} a_{k_i} \theta^{w_{k_i}} = \sum_{j=1}^l a_j \theta^{w_j}|R_j|.$$  We  claim that for each $j$, $|R_j|\leqslant C2^{w_j}$.  Indeed, suppose $|R_j|>C2^{w_j}$ for some $j$.  By Theorem \ref{stabilization}$(iv)$, if $R_j=\{r_1, \ldots, r_m\}$, with $r_1<\ldots <r_m$, there exists extended pruning $(\sigma', \tau'):\Gamma_{\xi,m}.X\to \Gamma_{\xi, n}.X$ such that $\sigma'(\Lambda_{\xi, m,i})\subset \Lambda_{\xi,n,r_i}$.   We now use Lemma \ref{criteria} to deduce that $Sz_\xi(K, \phi^{w_j}/2)>C2^{w_j}$, which is a contradiction.  Thus we deduce that $|R_j|\leqslant C 2^{w_j}$ for each $j$.   This means that for each $1\leqslant j\leqslant l$, $$\theta^{w_j} = (2^{-1/\alpha})^{w_j}=(2^{w_j})^{-1/\alpha} \leqslant C^{1/\alpha} |R_j|^{-1/\alpha}\leqslant C |R_j|^{-1/\alpha}.$$

Then \begin{align*} \sum_{j=1}^l a_j \theta^{w_j}|R_j| & \leqslant C\sum_{j=1}^l a_j|R_j|^{1-1/\alpha} = C\sum_{j=1}^l a_j|R_j|^{1/\beta}\leqslant C \bigl(\sum_{j=1}^l |a_j|^\alpha\bigr)^{1/\alpha}\bigl(\sum_{j=1}^n |R_j|\bigr)^{1/\beta} \\ & \leqslant C|R|^{1/\beta} \leqslant Cn^{1/\beta}.\end{align*} Thus we reach a contradiction.  

We now return to the proof of the sublemma. First fix $w\in \nn$ such that $\theta^w<\delta$.   For each $t\in MAX(\Gamma_{\xi,n})$, write $f_{\tau(t)}=\sum_{i=1}^{k_t} a_{i,t}f_{i,t}$ for some disjointly supported $f_{i,t}\in \cup_{j=1}^\infty N_{\theta^j, \phi^j, \Gamma_{\xi,n}}$ and $a_{i,t}\geqslant 0$ such that $\sum_{i=1}^{k_t}a_{i,t}^\alpha\leqslant 1$.    Let $S_t=\{i\leqslant k_t: \|a_{i,t}f_{i,t}\|_\infty\geqslant \delta\}$.  Note that since $\sum_{i=1}^{k_t}a_{i,t}^\alpha\leqslant 1$, $|S_t|\leqslant \lfloor 1/\delta^\alpha\rfloor=:k_0$.   As in the previous lemma, we write $\sum_{i\in S_t}a_{i,t}f_{i,t}=\sum_{i=1}^{l_t} a'_{i,t}f_{i,t}'$ for some $l_t\leqslant k_0$.    Considering the function from $MAX(\Gamma_{\xi,n}.X)$ given by $t\mapsto l_t\in \{1, \ldots, k_0\}$, we use Theorem \ref{stabilization}$(iii)$ to obtain $l\in \nn$ and a level preserving extended pruning $(\sigma', \tau'):\Gamma_{\xi,n}.X\to \Gamma_{\xi,n}.X$ such that for all $t\in MAX(\Gamma_{\xi,n}.X)$, $l_{\tau'(t)}=l$.  Note that since $ \|a'_{i, \tau'(t)}f'_{i, \tau'(t)}\|_\infty \geqslant \delta$ for every $1\leqslant i\leqslant l$ and $t\in MAX(\Gamma_{\xi,n})$, if $f'_{i, \tau'(t)}\in N_{\theta^j, \phi^j, \Gamma_{\xi,n}}$, $j\leqslant w$.   Let $w_{i, \tau'(t)}$ be the value $j\in \{1, \ldots, w\}$ such that $f'_{i, \tau'(t)}\in N_{\theta^j, \phi^j, \Gamma_{\xi,n}}$.  By considering the map from $MAX(\Gamma_{\xi,n}.X)$ into $B_{\ell_\alpha^l}\times \{1, \ldots, w\}^l$ given by $$t\mapsto \bigl((a_{i, \tau'(t)})_{i=1}^l, (w_{i, \tau'(t)})_{i=1}^l\bigr),$$  we use Theorem \ref{stabilization}$(iii)$ again to find another level preserving extended pruning $(\sigma'', \tau''):\Gamma_{\xi,n}.X\to \Gamma_{\xi,n}.X$, $(a_i)_{i=1}^l\in B_{\ell_\alpha^l}$ and $(w_i)_{i=1}^l \subset \{1, \ldots, w\}$ such that for all $t\in MAX(\Gamma_{\xi,n})$, $\|(a_{i, \tau'\circ \tau''(t)})_{i=1}^l-(a_i)_{i=1}^l\|_{\ell_\alpha^l}<\delta$ and for all $1\leqslant i\leqslant l$, $f_{i, \tau'\circ \tau''(t)}'\in N_{\theta^{w_i}, \phi^{w_i}, \Gamma_{\xi,n}}$.   Note that for all $t\in MAX(\Gamma_{\xi,n}.X)$, $$\|f_{\tau\circ \tau'\circ \tau''(t)}-\sum_{i=1}^l a_i f'_{i, \tau'\circ \tau''(t)}\|_\infty <\delta.$$  This implies that for any $i\in R$ and any $\Lambda_{\xi,n, i}\ni s\preceq t$, since $$\delta\leqslant b_i \leqslant f_{\tau\circ\tau'\circ \tau''}(e_{r(\sigma\circ \sigma'\circ \sigma''(s))}),$$ $r(\sigma\circ \sigma'\circ \sigma''(s))\in \cup_{j=1}^l \text{supp}(f'_{j, \tau'\circ\tau''(t)})$.  Thus we may let $\kappa(s,t)$ be the unique $j\in \{1, \ldots, l\}$ such that $r(\sigma\circ \sigma'\circ \sigma''(s))\in \text{supp}(f_{j, \tau'\circ \tau''(t)})$ if $s\in \cup_{i\in R}\Lambda_{\xi,n,i}$, and $\kappa(s,t)=0$ otherwise.    Applying Theorem \ref{stabilization}$(ii)$, we deduce the existence of $(k_i)_{i\in R}\subset \{1, \ldots, l\}$ and a level preserving extended pruning $(\sigma''', \tau'''):\Gamma_{\xi,n}.X\to \Gamma_{\xi,n}.X$ such that setting  $\sigma_0=\sigma'\circ \sigma''\circ \sigma'''$, $\tau_0=\tau'\circ \tau''\circ \tau'''$, $h_{i,t}=f'_{i, \tau_0(t)}$, and $g_t=\sum_{i=1}^l a_i h_{i,t}$, finishes the proof.

\end{proof}

\section{Proof of the main results}

\begin{proof}[Proof of Theorem \ref{main theorem}] Let $\phi=1/3$ and $\theta=2/3$, so that $\frac{1}{\theta-\phi}=3$.  Fix $1<p<\infty$.  Fix any $1<\alpha, \beta<\infty$ such that $\beta<p$ and $1/\alpha+1/\beta=1$.   Let $C'=C'(\beta, p)$ be the constant from Theorem \ref{LT}.  Fix $\ee>0$.  By Lemma \ref{game lemma}, Player I has a winning strategy in the game with target set $$\Bigl\{t\in MAX(\Gamma_\xi.X.\mathcal{C}): \|\sum_{s\preceq t}\mathbb{P}_\xi(s) e_s\|_{\theta, \phi, \alpha, \Gamma_\xi} > \ee\Bigr\}.$$ By Corollary \ref{espoo}, for any $B$-tree $T$ with $o(T)=\omega^{1+\xi}$ and any normally weakly null collection $(f_t)_{t\in T.L_p(X)}\subset B_{L_p(X)}$, $$\inf\Bigl\{\|\varrho(f)\|_{L_p}: t\in T.L_p(X), f\in \text{co}(f_s:\varnothing\prec s\preceq t)\Bigr\} \leqslant 3C'\ee.$$   We deduce $Sz(K_p)\leqslant \omega^{1+\xi}$ by Theorem \ref{characterization}$(i)$.

It is clear that $Sz(K)\leqslant Sz(K_p)$ for any $1<p<\infty$.  If $K$ is convex, then either $Sz(K)=\infty$, in which case $Sz(K_p)=\infty=\omega \infty=Sz(K)$, or there exists an ordinal $\xi$ such that $Sz(K)=\omega^\xi$ \cite[Proposition $4.2$]{Calt}.  We deduce that $Sz(K_p)\leqslant \omega^{1+\xi}=\omega Sz(K)$ by the previous paragraph. In the case that $\xi\geqslant \omega$, $1+\xi=\xi$. 

\end{proof}

\begin{proof}[Proof of Theorem \ref{power type}] If $\textbf{p}_\xi(K)=\infty$, there is nothing to show, so assume $\textbf{p}_\xi(K)<\infty$. Fix $1<p,q<\infty$ with $1/p+1/q=1$.   Fix $1<\alpha, \beta, \gamma<\infty$ such that $\max\{\textbf{p}_\xi(K),q\}<\gamma< \alpha$ and $1/\alpha+1/\beta=1$. Let $C'=C'(\beta, p)$ be the constant from Theorem \ref{LT}.  Let $\phi=2^{-1/\gamma}$ and note that $\sup_{i\in \nn}\ee^\gamma Sz_\xi(K, \phi^i/2)/2^i<\infty$.  By Lemma \ref{game lemma 2}, with $\theta=2^{-1/\alpha}$, there exists a constant $C_1$ such that for every $n\in \nn$, Player I has a winning strategy in the game with target set $$\Bigl\{t\in MAX(\Gamma_{\xi,n}.X.\mathcal{C}): \|\sum_{s\preceq t}\mathbb{P}_{\xi,n}(s) e_s\|_{\theta, \phi, \alpha, \Gamma_{\xi,n}}> C_1/n^{1/\beta}\Bigr\}.$$     By Corollary \ref{espoo}, for every $n\in \nn$, every $B$-tree $T$ with $o(T)=\omega^{1+\xi}n$, and every normally weakly null $(f_t)_{t\in T.L_p(X)}\subset B_{L_p(X)}$, $$\inf\Bigl\{ \|\varrho(f)\|_{L_p}: t\in T.L_p(X), f\in \text{co}(\varnothing\prec s\preceq t)\Bigr\} \leqslant  \frac{C_1C'}{n(\theta-\phi)} n^{1/\beta}=  \frac{C_1C'}{n^{1/\alpha}\theta-\phi}. $$ By Theorem \ref{characterization}$(ii)$, $\textbf{p}_{1+\xi}(K_p)\leqslant \alpha$.  Since $\alpha>\max\{\textbf{p}_\xi(K),q\}$ was arbitrary, we deduce that $\textbf{p}_{1+\xi}(K_p)\leqslant \max\{\textbf{p}_\xi(K),q\}$.

\end{proof}

\end{document}